\numberwithin{equation}{section}
\def\eps{\epsilon}
\def\II{\mathbb{I}}
\def\ZZ{\mathbb{Z}}
\def\RR{\mathbb{R}}
\def\SS{\mathbb{S}}
\def\TT{\mathbb{T}}
\def\NN{\mathbb{N}}
\def\Ccal{\mathcal{C}}
\def\Dcal{\mathcal{D}}
\def\Ecal{\mathcal{E}}
\def\Gcal{\mathcal{G}}
\def\Hcal{\mathcal{H}}
\def\Ncal{\mathcal{N}}
\def\Rcal{\mathcal{R}}
\def\Scal{\mathcal{S}}
\def\Vcal{\mathcal{V}}
\def\Xcal{\mathcal{X}}
\newcommand{\EE}{\mathbb{E}}
\newcommand\ringring[1]{%
  {% make an Ord atom
   \mathop{\kern0pt #1}\limits^{% set a box over the variable
     \vbox to-1.85ex{
       \kern-2ex % lower the ring accents
       \hbox to 0pt{\hss\normalfont\kern.1em \r{}\kern-.45em \r{}\hss}%
       \vss % fill
     }% end of \vbox
   }% end of the superscript
  }% end of \mathop
}
\newcommand{\simD}{\mathrel{\mathring{\sim}}}
\newcommand{\sime}{\mathrel{\ringring{\sim}}}
\newcommand{\simf}{\mathrel{\dot{\sim}}}
\newcommand{\simfe}{\mathrel{\ddot{\sim}}}
\DeclareMathOperator{\Var}{Var}
\DeclareMathOperator{\tr}{tr}
\DeclareMathOperator{\vol}{vol}
\DeclareMathOperator{\meas}{meas}
\DeclareMathOperator{\rk}{rk}
\theoremstyle{plain}
\newtheorem{theorem}{Theorem}[section]
\newtheorem{proposition}[theorem]{Proposition}
\newtheorem{lemma}[theorem]{Lemma}
\theoremstyle{definition}
\newtheorem{remark}{Remark}
\title{On the variance of the nodal volume of arithmetic random waves}
\author{Giacomo Cherubini}%Giacomo Cherubini
\author{Niko Laaksonen}%Niko Laaksonen
\address{
    Alfr\'ed R\'enyi Institute of Mathematics,
    POB 127, Budapest H-1364, Hungary;
}
\email{
    cherubini.giacomo@renyi.hu\\
    laaksonen.niko@renyi.hu
}
\subjclass[2010]{Primary 58J50; Secondary 58J37, 60G15, 60G60, 11P21}
\date{\today}
\begin{document}

%% TITLE AND ABSTRACT %%
\maketitle

%% CONTENT %%

\begin{abstract}
  Rudnick and Wigman (\emph{Ann.\ Henri Poincar\'{e}}, 2008) conjectured that the
  variance of the volume of
  the nodal set of arithmetic random waves on the $d$-dimensional torus is
  $O(E/\Ncal)$, as $E\to\infty$, where $E$ is the energy and $\Ncal$
  is the dimension of the eigenspace corresponding to $E$. Previous results
  have established this with
  stronger asymptotics when $d=2$ and $d=3$. In this brief note we prove
  an upper bound of the form $O(E/\Ncal^{1+\alpha(d)-\epsilon})$,
  for any $\epsilon>0$ and $d\geq 4$,
  where $\alpha(d)$ is positive and tends to zero with $d$.
  The power saving is the best possible with the current
  method (up to $\epsilon$)
  when $d\geq 5$ due to the proof of the $\ell^{2}$-decoupling conjecture by
  Bourgain and Demeter.
\end{abstract}

\section{Introduction}

Let $d\geq 4$ and $m\geq 1$. Denote by $F$
a Gaussian random eigenwave on the $d$-dimensional torus
$\TT^{d}=\RR^{d}/\ZZ^{d}$ of eigenvalue $E=4\pi^2m$, that is
\begin{equation}\label{star}
F(x) = \frac{1}{\sqrt{\Ncal}} \sum_{\mu\in\Ecal} a_\mu e(\mu\cdot x),
\end{equation}
where
\[
\Ecal = \Ecal_m = \{\mu\in\ZZ^d:\; |\mu|^2=m\}, \quad \Ncal=|\Ecal|,
\]
and the coefficients $a_\mu$ are complex standard Gaussian random variables.
We assume that the $a_{\mu}$ are independent except for the relation
$a_{-\mu}=\overline{a_\mu}$, which makes $F$ real-valued. Such $F$ is called
(after~\cite{krishnapur_nodal_2013}) an \emph{arithmetic random wave}.

The function $F$ serves as a model for a generic eigenfunction of $\TT^{d}$.
Due to the arithmetic structure of
the set of frequencies $\Ecal$,
the properties of $F$ have been extensively studied by both analysts and number
theorists~\cite{krishnapur_nodal_2013, bourgain_toral_2014,
rudnick_volume_2008}.
Of particular interest to us are the nodal set of $F$, defined as
\[
V = \{x\in\TT^d:\; F(x)=0\},
\]
and its volume $\Vcal=\meas(V)$ (with respect to the $(d-1)$-dimensional
Lebesgue measure).
Rudnick and Wigman~\cite[Proposition~4.1]{rudnick_volume_2008} computed the
expected value of the nodal volume,
\begin{equation}\label{starstar}
\EE[\Vcal] = \Gcal_d \cdot \sqrt{\frac{m}{d}},
\qquad
\Gcal_d
=
\sqrt{4\pi} \cdot \frac{\Gamma(\frac{d+1}{2})}{\Gamma(\frac{d}{2})},
\end{equation}
which is in accordance with Yau's conjecture $\Vcal\asymp \sqrt{E}$.
They also proved~\cite[Theorem~6.1]{rudnick_volume_2008} that for all dimensions
$d\geq 2$, we have the upper bound
\begin{equation}\label{eq:rudwig}
    \Var(\Vcal) \ll \frac{m}{\sqrt{\Ncal}}.
\end{equation}
Furthermore, in the same paper it was conjectured
(see~\cite[p.~110]{rudnick_volume_2008},
also~\cite[(6)]{krishnapur_nodal_2013}) that
the stronger upper bound
\begin{equation}\label{eq:rudwigconj}
\Var(\Vcal) \ll \frac{m}{\Ncal}
\end{equation}
should hold.
In a landmark paper of Krishnapur, Kurlberg and
Wigman~\cite{krishnapur_nodal_2013} the authors showed that for $d=2$ the
correct asymptotic is in fact of the order $c_{2}E/\Ncal^{2}$ (where
$c_{2}=\Gcal_{2}^{2}/128$ in the generic case) since the coefficients contributing
to the leading order term cancel out perfectly. They called this behaviour
\emph{arithmetic Berry cancellation}, see~\cite[\S1.6]{krishnapur_nodal_2013}
for more details.
This was later extended to $d=3$ by Benatar and
Maffucci~\cite{benatar_random_2017} who proved an asymptotic of the form
$c_{3}E/\Ncal^{2}$ with $c_{3}=(2/375)\Gcal_{3}^{2}$. Notice that the
leading term is again smaller than expected due to the same cancellation phenomenon.

In this note we prove that the conjecture~\eqref{eq:rudwigconj} of Rudnick and Wigman
holds also for $d\geq4$ with a power saving that tends to 0 as $d\to\infty$.

\begin{theorem}\label{intro:thm}
Let $\Vcal$ denote the nodal volume of arithmetic random waves
of eigenvalue $4\pi^{2}m$ on $\TT^d$ for $d\geq 4$ and assume that $m$
is odd if $d=4$. Then
\[
    \Var(\Vcal) \ll m \Ncal^{-1-\alpha(d)+\epsilon},
\]
where
\begin{equation}\label{eq:alphad}
    \alpha(d)=
    \begin{cases}
        \displaystyle\frac{2}{d-1} & \text{if } d=4,\\
        \displaystyle\frac{2}{d-2} & \text{if } d\geq 5.
    \end{cases}
\end{equation}
\end{theorem}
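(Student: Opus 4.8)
My plan is to follow the Kac--Rice approach familiar from the $d=2,3$ literature. Start from the Kac--Rice formula for the second moment of the nodal volume:
\[
\EE[\Vcal^2] = \int_{\TT^d}\int_{\TT^d} K(x,y)\,dx\,dy,
\]
where $K(x,y)$ is the two-point correlation function, expressible through the joint (Gaussian) distribution of $(F(x),F(y),\nabla F(x),\nabla F(y))$. By stationarity this depends only on $z=x-y$, and the relevant object is the covariance kernel
\[
r(z) = \frac{1}{\Ncal}\sum_{\mu\in\Ecal} e(\mu\cdot z),
\]
together with its first and second derivatives. Subtracting $\EE[\Vcal]^2$ from \eqref{starstar}, one obtains an integral representation for $\Var(\Vcal)$ whose integrand is, after a Taylor expansion in the small quantities $r(z)$ and $\nabla r(z)$, a sum of terms each controlled by a power of $|r(z)|$ and of $|\nabla r(z)|/\sqrt{m}$. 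The leading contribution is quadratic in these quantities; the ``arithmetic Berry cancellation'' observed for $d=2,3$ means the genuinely quadratic piece has a coefficient that vanishes or is lower order, but here I do not need that windfall — I only need the crude bound, so I would keep the quadratic terms and bound everything above by
\[
\Var(\Vcal) \ll \frac{m}{\Ncal}\Bigl(1 + \sum_{z\neq 0}\bigl(|r(z)|^2 + m^{-1}|\nabla r(z)|^2 + \text{higher order}\bigr)\Bigr),
\]
uniformly after accounting for the singularity as $z\to 0$ (handled by the standard local argument that near the diagonal the Gaussian field looks like a nondegenerate one, contributing $O(m/\Ncal)$).

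The crux is therefore to estimate the arithmetic sums
\[
\sum_{z\in\TT^d} |r(z)|^2, \qquad \sum_{z\in\TT^d} |\widehat{\partial_j r}|^2,
\]
and more precisely their contribution is governed by the number of solutions to $\mu_1+\mu_2 = \mu_3+\mu_4$ with all $\mu_i\in\Ecal$, i.e.\ the fourth-moment / additive-energy type quantity
\[
\mathcal{S} = \#\{(\mu_1,\mu_2,\mu_3,\mu_4)\in\Ecal^4 : \mu_1+\mu_2=\mu_3+\mu_4\}.
\]
A trivial count gives $\mathcal{S}\ll\Ncal^3$, which only reproduces \eqref{eq:rudwig}. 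The improvement comes from $\ell^2$-decoupling: writing $\mathcal{S}$ via $\int_{\TT^d}|\sum_{\mu\in\Ecal}e(\mu\cdot x)|^4\,dx$ and applying the Bourgain--Demeter decoupling inequality for the sphere (valid for $d-1$-dimensional hypersurfaces, with the endpoint exponent dictated by dimension), one gains a saving $\mathcal{S}\ll_\epsilon m^\epsilon \Ncal^{2}\cdot\Ncal^{1-\beta(d)}$ for a suitable $\beta(d)>0$. For $d\geq 5$ decoupling is sharp and yields the claimed $\alpha(d)=2/(d-2)$; for $d=4$ one is at the critical exponent where decoupling just fails to be sharp, so I would instead invoke a direct estimate on representations of integers as sums of squares (hence the oddness hypothesis on $m$, which controls the $2$-adic behaviour and keeps $\Ncal\asymp r_d(m)$ well-behaved), giving the slightly weaker $\alpha(d)=2/(d-1)$.

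The main obstacle, and the step requiring the most care, is converting the off-diagonal integral into the additive-energy count with the right powers of $\Ncal$ and $m$: one must track that each factor of $r(z)$ or $m^{-1/2}\partial_j r(z)$ really does contribute a full factor $\Ncal^{-1}$ times a Fourier coefficient supported on $\Ecal$ (for derivatives, on $\Ecal$ weighted by $\mu_j$, whose size is $O(\sqrt m)$, which is why the normalisation $m^{-1/2}$ absorbs it cleanly), and that the higher-order terms in the Taylor expansion are genuinely negligible — this uses that $\|r\|_\infty$ and $\|\nabla r\|_\infty/\sqrt m$ are bounded and that $r(z)\to 0$ in an averaged sense, so the series converges. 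Once the bookkeeping places the problem on the single quantity $\mathcal{S}$, the analytic input (decoupling for $d\geq 5$, classical circle-method/sum-of-squares asymptotics for $d=4$) finishes the proof, and the comment in the abstract about optimality is precisely the observation that decoupling is sharp for $d\geq 5$ so $\alpha(d)=2/(d-2)$ cannot be improved by this route.
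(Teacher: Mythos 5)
Your overall framework (Kac--Rice formula, Taylor expansion of the two-point function in $r$ and its derivatives, reduction of the off-diagonal contribution to the fourth-moment count handled by Bourgain--Demeter) is the same as the paper's, but there is a genuine gap at the decisive step: you explicitly renounce the arithmetic Berry cancellation (``I do not need that windfall'') and propose a bound of the shape $\Var(\Vcal)\ll \frac{m}{\Ncal}\bigl(1+\sum(\cdots)\bigr)$. Even if every sum inside the bracket were shown to be $o(1)$, this gives only $\Var(\Vcal)\ll m/\Ncal$, i.e.\ the Rudnick--Wigman conjectured bound, not the claimed $m\,\Ncal^{-1-\alpha(d)+\epsilon}$. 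The quadratic terms in the expansion --- $r^2$, $\tr(X)$, $\tr(Y^2)$ --- each integrate to a quantity of size $\asymp 1/\Ncal$, so bounding them in absolute value can never beat $m/\Ncal$. The whole point of Proposition~\ref{intro:proposition} is that, after the exact bookkeeping of the coefficients $A_0,\dots,A_7$ in Lemma~\ref{1302:lemma001} and the integral evaluations of Lemma~\ref{lemma:xyints}, the total coefficient of $1/\Ncal$ vanishes identically for every $d$; only then is the surviving main term of size $m/\Ncal^2$, with errors controlled by $|\Xcal(4)|/\Ncal^2$, $|\Ccal(6)|/\Ncal^4$ and the equidistribution error $m^{-(d-3)/4+\epsilon}$, and only then does the decoupling bound $|\Ccal(4)|\ll\Ncal^{3-\alpha(d)+\epsilon}$ produce the exponent in the theorem. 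Without carrying out this cancellation your additive-energy input never enters with the claimed strength.

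Two secondary points. For $d=4$ the paper does not use a classical circle-method/sums-of-squares estimate: the bound $|\Ccal_m^{(4)}(4)|\ll\Ncal^{3-2/3+\epsilon}$ comes from Bourgain--Demeter's discrete restriction theorem in 4D, and the hypothesis that $m$ is odd is needed both there and to guarantee $\Ncal\asymp m^{1+o(1)}$; a direct representation count does not give this exponent. Also, the higher-order terms are not negligible merely because $r$ decays on average: $|r|$ can be close to $1$ away from the origin, and the paper controls this via the singular set (Lemma~\ref{lemma:singular1}, with measure bounded by the moments $\Rcal(\ell)$) together with the six-fold correlation bound $|\Ccal(6)|\ll\Ncal^{5-\alpha(d)+\epsilon}$ from Lemma~\ref{lemma:corrs}; your ``near-diagonal'' remark does not cover this set, so this part of your argument also needs to be repaired.
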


An important part of the proof concerns the structure of the frequency set
$\Ecal_{m}$. In particular, we need information on the number of ways that
elements of $\Ecal_{m}$ can sum up to zero. Therefore, we define the set of
(linear) $\ell$-correlations of $\Ecal_{m}$ as
\[
\Ccal(\ell) = \Ccal_m^{(d)}(\ell)
=
\bigl\{(\mu_1,\ldots,\mu_\ell)\in\Ecal^\ell: \mu_1+\cdots+\mu_\ell=0\bigr\},
\]
and the set of non-degenerate correlations by
\[
\Xcal(\ell) = \Xcal_m^{(d)}(\ell)
=
\Bigl\{
(\mu_1,\ldots,\mu_\ell)\in \Ccal_m^{(d)}(\ell):
\forall\,\Hcal\subsetneq\{1,\ldots,\ell\},\;\sum_{i\in\Hcal}\mu_i\neq 0
\Bigr\}.
\]
The main theorem then follows from the following proposition together with
existing bounds for $\Ccal(4)$.
Such bounds were proved by
Bourgain and Demeter for $d=4$ in~\cite{bourgain_new_2015} and
for $d\geq5$ in~\cite{bourgain2015_annals} as part of their proof of the
$\ell^{2}$-decoupling conjecture.

\begin{proposition}\label{intro:proposition}
    Let $d$ and $m$ be as in Theorem~\ref{intro:thm}. We then have
    \begin{equation}\label{eq:propeq}
        \Var(\Vcal) = \frac{m}{\Ncal^2}
        \left(
            \frac{(d-1)}{d(d+2)^3}\Gcal_d^2
            +
            O\left(
                \frac{1}{m^{(d-3)/4-\eps}}
                +
                \frac{|\Xcal(4)|}{\Ncal^2}
                +
                \frac{|\Ccal(6)|}{\Ncal^4}
            \right)
        \right),
    \end{equation}
    where $\Gcal_d$ is given in \eqref{starstar}.
\end{proposition}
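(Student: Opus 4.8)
The plan is to run a Kac--Rice computation in the style of Rudnick--Wigman, refined so as to isolate the fourth-order term. Writing $r(z)=\EE[F(0)F(z)]=\Ncal^{-1}\sum_{\mu\in\Ecal}e(\mu\cdot z)$ for the covariance of $F$, one has $\Vcal=\int_{\TT^d}\delta(F(x))\,|\nabla F(x)|\,dx$ in the distributional sense, and by stationarity
\[
\Var(\Vcal)=\int_{\TT^d}\bigl(K(z)-\rho_1^{2}\bigr)\,dz,\qquad
K(z)=\EE\bigl[\delta(F(0))\,\delta(F(z))\,|\nabla F(0)|\,|\nabla F(z)|\bigr],
\]
with $\rho_1=\EE[\Vcal]=\Gcal_d\sqrt{m/d}$ by \eqref{starstar}. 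Here $K(z)-\rho_1^2$ depends only on $r(z)$ and its first and second derivatives, and after rescaling by $\sqrt{E/d}$ and $E/d$ the latter become exponential sums over $\Ecal$ with bounded coefficients; thus the Bourgain--Demeter $\ell^2$-decoupling theorem supplies
\[
\|r\|_{L^p(\TT^d)}\ll_\epsilon \Ncal^{-1/2+\epsilon},\qquad 2\le p\le\tfrac{2(d+1)}{d-1},
\]
and the same for the rescaled derivatives. This is the only analytic input beyond the Gaussian computation, and it is through it that the exponent $(d-3)/4$ in \eqref{eq:propeq} enters.

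I would then use the Wiener chaos decomposition $\Vcal=\EE[\Vcal]+\Vcal[2]+\Vcal[4]+\cdots$ (odd chaoses vanish since $\Vcal$ is unchanged under $F\mapsto-F$), so that $\Var(\Vcal)=\sum_{q\ge2}\Var(\Vcal[q])$, each summand being the integral over $\TT^d$ of a fixed polynomial in $r$ and its rescaled derivatives. First one shows $\Vcal[2]=0$: the second-chaos part of the Kac--Rice density is a linear combination of $F^2-1$ and $\sum_i\bigl((\partial_iF)/\sqrt{E/d}\bigr)^2-1$ with explicit $d$-dependent coefficients, and because $F$ is a Laplace eigenfunction one has the identity $\int_{\TT^d}|\nabla F|^2=E\int_{\TT^d}F^2$, which forces the two contributions to cancel (the higher-dimensional form of the arithmetic Berry cancellation). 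Hence $\Var(\Vcal)=\Var(\Vcal[4])+\sum_{q\ge6}\Var(\Vcal[q])$. Expanding $\Vcal[4]$ in the $a_\mu$ and integrating out $x\in\TT^d$ imposes $\mu_1+\cdots+\mu_4=0$ and gives
\[
\Var(\Vcal[4])=\frac{E}{d\,\Ncal^{4}}\sum_{(\mu_1,\ldots,\mu_4)\in\Ccal(4)}w_d(\mu_1,\ldots,\mu_4),
\]
where $w_d$ is bounded and depends only on the inner products $\mu_i\cdot\mu_j/m$ (it is assembled from the fourth Hermite coefficients of $\delta(t)|v|$ on $\RR\times\RR^d$). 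Splitting $\Ccal(4)$ into its degenerate part --- the union of the three families $(\mu,-\mu,\nu,-\nu)$ --- and the non-degenerate part $\Xcal(4)$, the latter contributes $O\bigl(m\Ncal^{-2}\cdot|\Xcal(4)|/\Ncal^2\bigr)$. For the degenerate part one is reduced to $\tfrac{m}{\Ncal^2}\cdot 3c_d\,\Ncal^{-2}\sum_{\mu,\nu\in\Ecal}\widetilde w_d(\mu\cdot\nu/m)+O(m/\Ncal^3)$, and here, expanding $\widetilde w_d$ into Gegenbauer polynomials and using the equidistribution of $\Ecal_m$ on the sphere $|x|^2=m$ with a power-saving rate --- valid for $d\ge5$, and on $\SS^3$ for $d=4$ when $m$ is odd --- one may replace $\Ncal^{-2}\sum_{\mu,\nu}$ by $\iint_{\SS^{d-1}\times\SS^{d-1}}\widetilde w_d$, an absolutely convergent integral whose Gegenbauer-degree-zero term I would evaluate explicitly to $\tfrac{(d-1)}{3d(d+2)^3}\Gcal_d^2$; the equidistribution error is absorbed into the stated error of \eqref{eq:propeq}, and I would cross-check the constant by specialising to $d=2,3$ and recovering the values of \cite{krishnapur_nodal_2013} and \cite{benatar_random_2017}. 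Finally one treats $\sum_{q\ge6}\Var(\Vcal[q])$, bounding the $q=6$ term crudely by $O\bigl(m\Ncal^{-2}\cdot|\Ccal(6)|/\Ncal^{4}\bigr)$ and, using $\int_{\TT^d}|r|^{2\ell}\le\int_{\TT^d}|r|^{6}$ for $\ell\ge3$ together with the decoupling $L^p$-bounds to control the region where the covariance and its derivatives are not small, bounding the remainder $\sum_{q\ge8}\Var(\Vcal[q])$ by $O\bigl(m\Ncal^{-2}(|\Ccal(6)|/\Ncal^4+m^{-(d-3)/4+\epsilon})\bigr)$. Collecting the three contributions gives \eqref{eq:propeq}.

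The main obstacle I anticipate is the bookkeeping of the fourth chaos: identifying the degenerate antipodal-pair contribution, verifying that its Gegenbauer-degree-zero coefficient is \emph{exactly} $\tfrac{(d-1)}{d(d+2)^3}\Gcal_d^2$ via a computation of the Hermite data of $\delta(t)|v|$ uniform in $d$, and --- most delicately --- showing that the remaining pieces (the non-degenerate set $\Xcal(4)$, the high-chaos tail, and the set where $r$ or its derivatives fail to be small, including the immediate neighbourhood of the diagonal) are absorbed into the stated error, which is precisely where the decoupling $L^p$-bounds must be invoked at the critical exponent and not through any softer estimate. Establishing $\Vcal[2]=0$ cleanly and organising the chaos expansion so that its tail converges with a quantitative rate are the other points requiring care.
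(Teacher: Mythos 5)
Your proposal takes a genuinely different route from the paper: you organise the computation through the Wiener chaos expansion of $\Vcal$ (as in \cite{marinucci_nonuniversality_2016} for $d=2$ and \cite{cammarota_nodal_2019} for $d=3$), whereas the paper works directly with the Kac--Rice two-point function $K_2$, removes a singular set $\Scal$ of measure $O(\Rcal(6))$ (Lemma~\ref{lemma:singular1}), Taylor-expands the Gaussian expectation $\EE[\|w_1\|\,\|w_2\|]$ in the matrices $X,Y$ (Lemma~\ref{1302:lemma001}, Proposition~\ref{1902:proposition}), and evaluates the resulting correlation sums through the moments $B_k$ of Lemma~\ref{lemma:bk}. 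Your fourth-chaos analysis is the chaos-side counterpart of the paper's treatment of degenerate $4$-correlations, and the vanishing of $\Vcal[2]$ via $\int|\nabla F|^2=E\int F^2$ is the counterpart of the vanishing of the $1/\Ncal$ coefficient in \eqref{1902:eq003}; those parts are plausible and would reproduce the main term and the $|\Xcal(4)|/\Ncal^2$ error.

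The genuine gap is the tail $\sum_{q\geq 6}\Var(\Vcal[q])$. In the $d=2,3$ literature this step is never done directly: there the tail is controlled by subtracting the fourth-chaos variance from the \emph{already known} total variance asymptotics of \cite{krishnapur_nodal_2013} and \cite{benatar_random_2017} --- i.e.\ by the very statement you are trying to prove here. Your proposed mechanism (``$\int_{\TT^d}|r|^{2\ell}\leq\int_{\TT^d}|r|^{6}$ together with decoupling $L^p$-bounds to control the region where $r$ and its derivatives are not small'') does not transfer to chaos projections, which are global random variables and not integrals restricted to a subset of $\TT^d$; to bound $\Var(\Vcal[2q])\ll C_q\,E\,|\Ccal(2q)|/\Ncal^{2q}$ with $\sum_q C_q<\infty$ you would need control of the diagram-formula combinatorics and of the Hermite coefficients of $\delta(t)\|v\|$ uniformly in $q$, and no such uniform estimate is supplied. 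This is exactly the point where the paper instead uses the singular-set decomposition together with $X,Y=O(1)$ (Lemma~\ref{1902:XYbouded}) and $|r|\leq 1-\tfrac{1}{16d}$ off $\Scal$, so that the expansion of $K_2$ terminates with a pointwise error $O(r^6+\tr(X^3)+\tr(Y^6))$ whose integral is $\ll\Rcal(6)$. A secondary inaccuracy: the exponent $(d-3)/4$ in \eqref{eq:propeq} does not come from $\ell^2$-decoupling but from the equidistribution rate of $\Ecal_m/\sqrt m$ on the sphere (Lemma~\ref{lemma:equid}, via bounds on Fourier coefficients of cusp forms), entering through the moments $B_k$, $k\geq4$; decoupling only bounds $|\Ccal(4)|$ and is used in deducing Theorem~\ref{intro:thm}, while in the Proposition $|\Xcal(4)|$ and $|\Ccal(6)|$ are kept symbolic. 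Likewise your claimed bound $\|r\|_{L^p}\ll\Ncal^{-1/2+\eps}$ up to $p=\tfrac{2(d+1)}{d-1}$ is not what Bourgain--Demeter give for lattice points on spheres. The misattribution is fixable, but the tail estimate, as written, is not established and is the crux of making the chaos route self-contained.
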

Notice that when $d=3$, the constant in~\eqref{eq:propeq} agrees
with~\cite[Theorem~1.2]{benatar_random_2017}, and when $d=2$ it agrees
with~\cite[Theorem~1.1]{krishnapur_nodal_2013} (in the case when
$\widehat{\mu}(4)\to0$ in their notation, i.e.~when the sequence of lattice
points equidistributes, which happens for a generic sequence).

We stress that the results in~\cite{krishnapur_nodal_2013}
and~\cite{benatar_random_2017} are stronger, because they obtain
true asymptotics for the variance $\Var(\Vcal)$.
In two and three dimensions the limiting distribution of $\Vcal$
has also been determined, see \cite{marinucci_nonuniversality_2016}
and \cite{cammarota_nodal_2019}.
On the other hand, when $d\geq 4$,
by a straightforward analytic argument
(see~\cite[p.~3050]{benatar_random_2017}), it
is possible to show the lower bound
\begin{equation}\label{intro:lowerbounds}
  |\Ccal_{m}^{(d)}(\ell)|\gg \Ncal^{\ell-d/(d-2)} = \Ncal^{\ell-1-2/(d-2)},
\end{equation}
for any $\ell\geq 4$.
In light of Proposition~\ref{intro:proposition} and the fact that
$|\Ccal(4)|$ and $|\Xcal(4)|$ are roughly of the same order
(see section~\ref{sec:lin_corrs}),
\eqref{intro:lowerbounds} means that the upper bound in Theorem \ref{intro:thm}
is the best possible with the current method
for all $d\geq 5$ up to the factor $\Ncal^{\epsilon}$.

Moreover, our proof suggests that in order to obtain an asymptotic formula in
higher dimensions one would require better control on the singular set
(see section~\ref{S3:singularset}) as well as precise information about
$\ell$-correlations of arbitrary length. In particular, it would be crucial to be able
to detect cancellation over non-degenerate correlations for all $\ell\geq 4$,
which seems very difficult. This is in contrast to the situation for $d=2,3$,
where only $|\Xcal(4)|$ is essential.

\subsection*{Acknowledgements}
We thank Mikl\'os Ab\'ert for making us interested in this problem,
and Gergely Harcos for useful discussions.
We also thank Riccardo Maffucci for his comments on an earlier
version of the paper. This work was supported by ERC Consolidator Grant 648017
and by the R\'enyi Int\'ezet Lend\"ulet Automorphic Research Group.

%%%%%%%%%%%%%%%
%% SECTION 2 %%

\section{Lattice Points on Spheres and Linear Correlations}

Recall that $\Ecal$ denotes the set of lattice points on the sphere
of radius $\sqrt{m}$ in $\RR^d$ and $\Ncal$ is the cardinality of $\Ecal$.
It is well-known (see e.g.~\cite[Corollary~11.3]{iwaniec1997})
that, for $d\geq 5$,
\begin{equation}\label{eq:Nsize}
    \Ncal \asymp m^{d/2-1}.
\end{equation}
The above estimate holds also for $d=4$ with an additional factor $m^{o(1)}$
if we restrict to $m$ being odd, which
we shall assume implicitly for the rest of the paper (for $d=4$ only).
When $m\to\infty$ the normalised lattice points $\mu/|\mu|$
become equidistributed on the unit sphere.
This is known with an explicit power saving in the error term.

\begin{lemma}\label{lemma:equid}
Let $g$ be a smooth function on the unit sphere $\SS$ in $\RR^d$ for $d\geq 4$.
Then, for every $\eps>0$ and sufficiently large $s\in\NN$, we have
\begin{equation}\label{eq:equid}
    \frac{1}{\Ncal} \sum_{\mu\in\Ecal} g\left(\frac{\mu}{|\mu|}\right)
    =
    \frac{1}{\mathrm{vol}(\SS)}\int_{\SS} g(x)\, dx
    +
    O\left((\lVert \Delta^{s}g\rVert_{\infty}+1) m^{-(d-3)/4+\eps}\right),
\end{equation}
as $m\to\infty$. The implied constant depends only on $d$, $s$ and $\epsilon$.
\end{lemma}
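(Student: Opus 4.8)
The plan is to reduce equidistribution of the projected lattice points $\mu/|\mu|\in\SS$ to a Weyl-type criterion in terms of exponential sums over $\Ecal$, and then to invoke known power-saving bounds on those sums coming from the theory of modular forms / the circle method. I will expand the test function $g$ on the unit sphere into spherical harmonics, $g=\sum_{k\ge 0}\sum_{j} c_{k,j} Y_{k,j}$, where $Y_{k,j}$ ranges over an orthonormal basis of degree-$k$ harmonics. The constant term $c_{0,0}$ reproduces exactly $\frac{1}{\vol(\SS)}\int_\SS g$, so it remains to bound, for each fixed $k\ge 1$, the averages $\Ncal^{-1}\sum_{\mu\in\Ecal} Y_{k,j}(\mu/|\mu|)$, and to sum the resulting estimates against the coefficients $c_{k,j}$.

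The key analytic input is that a degree-$k$ spherical harmonic $Y_{k,j}$, restricted to the sphere of radius $\sqrt m$ and summed over lattice points, is governed by the $(k+d/2)$-th Fourier coefficient of a weight-shifted theta series; equivalently, $m^{k/2}\,\Ncal^{-1}\sum_{\mu\in\Ecal} Y_{k,j}(\mu/|\mu|)$ is (up to normalisation) the $m$-th coefficient of a cusp form of weight $d/2+k$ (this is classical, going back to work on theta series with harmonic polynomials; for $d\ge 4$ the relevant space is a space of cusp forms once $k\ge 1$). The Eisenstein part only contributes to the main term, i.e.\ to $k=0$. Hence for $k\ge 1$ one gets a bound of the shape
\[
\frac{1}{\Ncal}\sum_{\mu\in\Ecal} Y_{k,j}\!\left(\frac{\mu}{|\mu|}\right)
\ll_{d,\eps} \frac{m^{(d-2)/4+\eps}}{\Ncal}\cdot r(k)\,\lVert Y_{k,j}\rVert_\infty
\ll_{d,\eps} m^{-(d-3)/4+\eps} r(k)\lVert Y_{k,j}\rVert_\infty,
\]
using $\Ncal\asymp m^{d/2-1+o(1)}$ from \eqref{eq:Nsize} and the Deligne/Rankin--Selberg-type bound on the $m$-th coefficient of a cusp form of weight $d/2+k$, where $r(k)$ absorbs the polynomial-in-$k$ dependence of that bound on the weight (this is where some care is needed: the implied constant must be controlled polynomially in $k$, which is standard). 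For $d=4$ and $k\ge 1$ the relevant weight is $\ge 3$ and the space is cuspidal; the hypothesis that $m$ is odd is used exactly here (to stay in a clean space of cusp forms and avoid the oldform/Eisenstein subtleties at the prime $2$), matching the standing assumption in the paper.

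Finally I would handle the summation over $k$. Applying the Laplacian repeatedly, $\Delta^s g=\sum_{k,j} (-1)^s \lambda_k^s c_{k,j} Y_{k,j}$ with $\lambda_k=k(k+d-2)$, so smoothness of $g$ forces the coefficients $c_{k,j}$ to decay faster than any power of $k$; combined with the polynomial bounds on $\lVert Y_{k,j}\rVert_\infty$ and on the number of basis elements of each degree, the tail $\sum_{k\ge 1}$ converges and is bounded by a constant multiple of $\lVert \Delta^s g\rVert_\infty + \lVert g\rVert_\infty$ for $s$ sufficiently large depending on $d$. This yields the stated error term $O\bigl((\lVert\Delta^s g\rVert_\infty+1)m^{-(d-3)/4+\eps}\bigr)$, with the implied constant depending only on $d,s,\eps$. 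The main obstacle is the modular-forms step: one needs the cusp-form interpretation of harmonic theta series in all dimensions $d\ge 4$, the subconvex (in fact Deligne-strength, since these are holomorphic) bound on their $m$-th coefficients, and — crucially for the summation over $k$ — an explicit polynomial control of all implied constants in the weight; the $d=4$ case additionally requires the oddness of $m$ to keep the argument clean. Everything else (spherical-harmonic expansion, Sobolev-type decay of Fourier coefficients on $\SS$, bounding $\Ncal$ from below) is routine.
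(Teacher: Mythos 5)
Your route is essentially the paper's: the proof in the paper is a pointer to the literature (Iwaniec, Proposition~11.4, together with Pommerenke, Fomenko and Palczewski et al.\ for the uniformity in $g$), and what those references do is exactly your argument --- expand $g$ in spherical harmonics, interpret $\sum_{|\mu|^2=m}P_{k,j}(\mu)$ as the $m$-th Fourier coefficient of a theta series with harmonic polynomial, cuspidal of weight $d/2+k$ for $k\ge 1$, bound that coefficient, and sum over $k$ using the rapid decay of the $c_{k,j}$ coming from $\Delta^s g$, with polynomial-in-$k$ control of all implied constants (the one point which you, like the paper, defer to the literature rather than prove). Three small corrections are in order. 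First, for odd $d\ge 5$ the weight $d/2+k$ is half-integral, so Deligne-strength bounds are not available despite holomorphy; the classical Rankin--Selberg/Petersson bound $a_m\ll m^{(d/2+k)/2-1/4+\epsilon}$ does hold in all cases and is precisely what produces the exponent $-(d-3)/4$ in the lemma, so your intermediate display (which is Deligne-strength) should be replaced by this weaker bound, leaving your final inequality unaffected. Second, the oddness of $m$ when $d=4$ is needed for the lower bound $\Ncal\gg m^{1-o(1)}$ in \eqref{eq:Nsize}, which you already invoke, rather than for cuspidality at the prime $2$. Third, your closing error $\lVert\Delta^{s}g\rVert_\infty+\lVert g\rVert_\infty$ should read $\lVert\Delta^{s}g\rVert_\infty+1$ as in the statement; your own computation gives this, since for $k\ge 1$ one has $|c_{k,j}|\le\lambda_k^{-s}\vol(\SS)^{1/2}\lVert\Delta^{s}g\rVert_\infty$ with $\lambda_k\ge 1$, so no $\lVert g\rVert_\infty$ term is needed.
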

\begin{proof}
  The exponent in the error term corresponds to the classical bound
  $a_{n}\ll n^{k/2-1/4+\epsilon}$ for the Fourier coefficients of a holomorphic
  cusp form of weight $k$ on $\Gamma_{0}(N)$, see for example
  \cite[Proposition~11.4]{iwaniec1997}
  where~\eqref{eq:equid} is proved for a fixed $g$.
  We need to be slightly more precise since it is crucial for us to control the dependence on $g$.
  This is certainly possible by combining the method of Palczewski et
  al.~\cite[Lemmas~6--7]{palczewski} and Fomenko~\cite{fomenko}. Their proofs
  would imply much stronger error terms (depending on the parity of $d$) than what we state, but this is not crucial for our argument.
  In fact, any rate of decay is enough so in principle even
  the earlier results of Pommerenke~\cite{pommerenke,pommerenkeB} and
  Malyshev~\cite{malyshev} (see also~\cite{linnik1968}) would suffice
  provided that one could make the dependence on $g$ explicit.
\end{proof}
Note that in~\eqref{eq:equid} the factor $dx/\vol(\SS)$ is the invariant measure on $\SS$.
We apply Lemma~\ref{lemma:equid} to evaluate the $k$-th moment
of the normalised inner product of two lattice points
\begin{equation}\label{Bk:def}
B_k := \frac{1}{m^k\Ncal^2} \sum_{\mu_1,\mu_2\in\Ecal} (\mu_1\cdot\mu_2)^k.
\end{equation}
\begin{lemma}\label{lemma:bk}
    Let $k\geq 1$ and let $B_k$ be as in \eqref{Bk:def}. Then
    \begin{equation}\label{eq:bk}
        B_k = \begin{cases}
            0 &\text{if $k$ is odd,}\\
            \frac{1}{d} &\text{if $k=2$,}\\
            \frac{\Gamma(\frac{k+1}{2})\Gamma(\frac{d}{2})}
            {\Gamma(\frac{k+d}{2})\Gamma(\frac{1}{2})} + O(m^{-(d-3)/4+\epsilon})
                        &\text{if $k\geq 4$ and $k$ is even.}
        \end{cases}
    \end{equation}
\end{lemma}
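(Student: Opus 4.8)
The plan is to reduce everything to Lemma~\ref{lemma:equid}. Writing $\mu_i = \sqrt{m}\,\omega_i$ with $\omega_i = \mu_i/|\mu_i| \in \SS$, we have $(\mu_1\cdot\mu_2)^k = m^k (\omega_1\cdot\omega_2)^k$, so that
\[
B_k = \frac{1}{\Ncal^2}\sum_{\mu_1,\mu_2\in\Ecal} (\omega_1\cdot\omega_2)^k.
\]
First I would handle the outer sum over $\mu_2$ (equivalently $\omega_2$) directly, keeping $\mu_1$ fixed: apply Lemma~\ref{lemma:equid} to the function $g(x) = (\omega_1\cdot x)^k$ on $\SS$. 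This $g$ is a polynomial of bounded degree, so $\lVert \Delta^s g\rVert_\infty$ is bounded uniformly in $\omega_1$ (depending only on $d$, $k$, $s$), and the error term $O(m^{-(d-3)/4+\epsilon})$ is uniform. This collapses the $\mu_2$-sum to an integral $\frac{1}{\vol(\SS)}\int_\SS (\omega_1\cdot x)^k\,dx$ plus an admissible error. By rotational invariance of the sphere measure, that integral does not depend on $\omega_1$: it equals $I_k := \frac{1}{\vol(\SS)}\int_\SS x_1^k\,dx$. Hence the remaining sum over $\mu_1$ is trivial and we get $B_k = I_k + O(m^{-(d-3)/4+\epsilon})$.

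It remains to evaluate the single spherical integral $I_k$. When $k$ is odd, $I_k=0$ by the symmetry $x_1 \mapsto -x_1$. For $k$ even, $I_k$ is a standard Beta-type integral: one parametrises by $x_1 = \cos\theta$ to get $I_k = \frac{\int_0^\pi \cos^k\theta\,\sin^{d-2}\theta\,d\theta}{\int_0^\pi \sin^{d-2}\theta\,d\theta}$, and the beta-function identity $\int_0^\pi \cos^k\theta\sin^{d-2}\theta\,d\theta = B(\tfrac{k+1}{2},\tfrac{d-1}{2})$ yields
\[
I_k = \frac{\Gamma(\frac{k+1}{2})\,\Gamma(\frac{d}{2})}{\Gamma(\frac{k+d}{2})\,\Gamma(\frac{1}{2})}.
\]
For $k=2$ this simplifies, using $\Gamma(\tfrac32)=\tfrac12\Gamma(\tfrac12)$ and $\Gamma(\tfrac{d+2}{2})=\tfrac{d}{2}\Gamma(\tfrac{d}{2})$, to exactly $\frac{1}{d}$; moreover for $k=2$ the error term can in fact be dropped, since $\sum_{\mu_2}(\mu_1\cdot\mu_2)^2$ is exactly $m\Ncal/d$ by the identity $\sum_{\mu\in\Ecal}\mu_i\mu_j = \frac{m\Ncal}{d}\delta_{ij}$ (itself a consequence of the invariance of $\Ecal$ under the signed permutations of coordinates, or of the trivial $\sum_\mu |\mu|^2 = m\Ncal$ together with symmetry). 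This gives all three cases.

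The only mildly delicate point is the uniformity in $\omega_1$ when applying Lemma~\ref{lemma:equid}, i.e.\ that the Sobolev-type norm $\lVert\Delta^s g\rVert_\infty$ of $g(x)=(\omega_1\cdot x)^k$ is bounded independently of $\omega_1$; but this is immediate because an orthogonal change of coordinates sending $\omega_1$ to $e_1$ turns $g$ into $x_1^k$ and commutes with $\Delta$, so the norm does not depend on $\omega_1$ at all. Everything else is a routine computation, so I do not expect a genuine obstacle here; the lemma is essentially a packaging of the equidistribution input for later use in the variance expansion.
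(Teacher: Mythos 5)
Your argument is essentially the paper's: for even $k\geq 4$ the paper also fixes $\mu_1$, applies Lemma~\ref{lemma:equid} to $x\mapsto\cos^k\varphi_{\mu_1,x}=(\omega_1\cdot x)^k$ with the observation that the relevant Sobolev norm is bounded uniformly in $\mu_1$, and then evaluates the $\mu_1$-independent spherical integral, obtaining the same Beta-function value (the paper uses full $d$-dimensional spherical coordinates where you use the one-variable reduction, which is a cosmetic difference). Your treatment of $k=2$ via the exact identity $\sum_{\mu\in\Ecal}\mu_i\mu_j=\frac{m\Ncal}{d}\delta_{ij}$ is the same content as the paper's citation of \cite[Lemma~2.3]{rudnick_volume_2008} (note the harmless slip: $\sum_{\mu_2}(\mu_1\cdot\mu_2)^2=m^2\Ncal/d$, not $m\Ncal/d$).

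The one real discrepancy is the odd case. The lemma asserts $B_k=0$ \emph{exactly}, whereas your route gives only $B_k=I_k+O(m^{-(d-3)/4+\epsilon})=O(m^{-(d-3)/4+\epsilon})$, since the equidistribution error does not vanish. The exact vanishing is what the paper uses later: in Lemma~\ref{lemma:dhrints} the identities $B_1=B_3=0$ are invoked to keep several integrals (e.g.\ \eqref{int_r2dd}, \eqref{int_dh2d}) within the $\simD$-error, which does not contain the $m^{-(d-3)/4+\epsilon}$ term; with your weaker bound those statements would have to be downgraded to $\sime$ (harmless for the final theorem, but not what is claimed). The fix is one line and is exactly the paper's argument: $\Ecal=-\Ecal$, so pairing $(\mu_1,\mu_2)$ with $(-\mu_1,\mu_2)$ cancels the terms $(\mu_1\cdot\mu_2)^k$ exactly when $k$ is odd. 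With that addition your proof is complete and matches the paper's.
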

Notice that the leading term in $B_{k}$ is in fact always rational.
\begin{proof}
    The first case is immediate since the term corresponding to
    $(\mu_{1}, \mu_{2})$ cancels out with e.g.\ $(-\mu_{1}, \mu_{2})$.
    The second case follows from~\cite[Lemma~2.3]{rudnick_volume_2008}.
    Now, if $k\geq 4$ and even, we generalise the argument
    of~\cite[Lemma~2.5]{benatar_random_2017} and write
    \[B_{k}=\frac{1}{\Ncal^{2}}\sum_{\mu_{1},\mu_{2}}\cos^{k}\varphi_{\mu_{1},\mu_{2}},\]
    where $\varphi_{\mu_{1},\mu_{2}}$ is the angle between $\mu_{1}$ and
    $\mu_{2}$. We first consider the sum over $\mu_{2}$.
    From Lemma~\ref{lemma:equid} it follows that
    \begin{equation}\label{eq:bkint}
        \frac{1}{\Ncal}\sum_{\mu_{2}}\cos^{k}\varphi_{\mu_{1},\mu_{2}}
        =\frac{1}{\vol
        \SS}\int_{\SS}\cos^{k}\varphi_{\mu_{1},x}\,dx+O(m^{-(d-3)/4+\epsilon}),
    \end{equation}
    since $||\Delta^{s} \cos^{k}\varphi_{\mu_{1},x}||_{\infty}$ is bounded.
    Notice that the integral is independent of $\mu_{1}$. In order to evaluate
    it, we let $\tilde{\mu}=(1,0,\ldots,0)$ and introduce the $d$-dimensional
    spherical coordinates
    $x=(x_{1},\ldots,x_{d})\to(1,\theta,\phi_{1},\ldots,\phi_{d-2})$ by
    \begin{gather*}
        x_{i} = \cos\phi_{i}\prod_{j=1}^{i-1}\sin\phi_{j}, \quad\text{for
        $i=1,\ldots,d-2$,}\\
        x_{d-1} = \cos\theta\prod_{j=1}^{d-2}\sin\phi_{j},\quad
            x_{d} = \sin\theta\prod_{j=1}^{d-2}\sin\phi_{j},
    \end{gather*}
    where $\phi_{i}\in[0,\pi)$ and $\theta\in[0,2\pi)$ and for $i=1$ the empty
    product is understood as having the value 1. The Jacobian of
    this transformation is
    \[J_{d}=\prod_{j=1}^{d-2}\sin^{d-1-j}\phi_{j}.\]
    Applying this transformation to the integral in~\eqref{eq:bkint} we get
    \begin{align*}
        \int_{\SS}\cos^{k}\varphi_{\mu_{1},x}\,dx
        &= \int_{\SS}\cos^{k}\varphi_{\tilde{\mu},x}\,dx\\
        &= \int_{0}^{2\pi}d\theta \int_{0}^{\pi}\cdots\int_{0}^{\pi}
        \cos^{k}\phi_{1}\prod_{j=1}^{d-2}\sin^{n-1-j}\phi_{j}\,d\phi_{1}\ldots
        d\phi_{d-2}.
    \end{align*}
    To evaluate the iterated integrals we deduce
    from~\cite[3.621~(5)]{gradshteyn2007} that
    \[\int_{0}^{\pi}\sin^{a}x \cos^{b}x\,dx =
    \frac{1+(-1)^{b}}{2}B(\tfrac{a+1}{2},\tfrac{b+1}{2}),\]
    and therefore
    \begin{equation*}
        B_{k} = \frac{2\pi}{\vol(\SS)}
        \frac{\Gamma(\frac{k+1}{2})\Gamma(\frac{d-1}{2})}{\Gamma(\frac{k+d}{2})}
        \prod_{j=2}^{d-2}\frac{\Gamma(\frac{1}{2})\Gamma(\frac{d-j}{2})}{\Gamma(\frac{d-j+1}{2})}
        =\frac{2\pi^{(d-1)/2}}{\vol(\SS)}\frac{\Gamma(\frac{k+1}{2})}{\Gamma(\frac{k+d}{2})}.
    \end{equation*}
    Since $\vol(\SS)=2\pi^{d/2}/\Gamma(\tfrac{d}{2})$, we obtain the
    statement of the lemma.
\end{proof}

\subsection{Linear correlations}\label{sec:lin_corrs}

In this paper we mainly need to focus on the structure of $\Ccal(4)$
since bounds for longer correlations can be derived from it via a simple argument.
Let us denote by $\Dcal(\ell)=\Ccal(\ell)\setminus\Xcal(\ell)$ the
set of degenerate correlations in $\Ccal(\ell)$.
For $\Dcal(4)$, we further distinguish
between the symmetric correlations $\Dcal'$ defined as those
that cancel out in pairs, and the diagonal correlations $\Dcal''$ which are
multiples of a single pair of antipodal points $\pm\mu$.
Therefore, a summation over $\Ccal(4)$ can be decomposed as
\begin{equation}\label{eq:4decomp}
    \sum_{\Ccal(4)}
    =\sum_{\substack{\mu_1=-\mu_2\\\mu_3=-\mu_4}}
    +\sum_{\substack{\mu_1=-\mu_3\\\mu_2=-\mu_4}}
    + \sum_{\substack{\mu_1=-\mu_4\\\mu_2=-\mu_3}}
    + O\Bigl(\sum_{\Dcal''}+\sum_{\Xcal(4)}\Bigr),
\end{equation}
which we will use repeatedly in section~\ref{S:integrals}.
\begin{remark}
    In Proposition~\ref{intro:proposition} we show explicitly
    the dependence on $|\Xcal(4)|$
    so as to preserve the analogy with~\cite{krishnapur_nodal_2013}
    and~\cite{benatar_random_2017}.
    However, for the purpose of proving Theorem~\ref{intro:thm},
    the distinction between $|\Xcal(4)|$ and $|\Ccal(4)|$ is
    superfluous since we merely bound the former by the latter.
    This is different to the situation in two and three dimensions,
    where one can show that the majority of 4-correlations come from $\Dcal(4)$.
    When $d\geq 4$, the same argument fails due to the abundance of lattice
    points making it difficult to distinguish the non-degenerate correlations.
    On the other hand, because of the lower bound~\eqref{intro:lowerbounds}
    we do not lose much in using bounds only for $|\Ccal(4)|$.
\end{remark}

We have the following result to bound the total number of $\ell$-correlations.
\begin{lemma}\label{lemma:corrs}
Let $d\geq 4$ and $\ell\geq 4$. Then we have
\[
|\Ccal(\ell)| \ll \Ncal^{\ell-1-\alpha(d)+\epsilon},
\]
where $\alpha(d)$ is as in~\eqref{eq:alphad}.
\end{lemma}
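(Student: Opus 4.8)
The plan is to reduce the general $\ell$-correlation count to the case $\ell=4$, for which Bourgain and Demeter's decoupling estimates give $|\Ccal(4)|\ll\Ncal^{2+o(1)}$ when $d=5$ (and the analogous bound $|\Ccal(4)|\ll\Ncal^{3-2/(d-1)+o(1)}$ when $d=4$, $|\Ccal(4)|\ll\Ncal^{3-2/(d-2)+o(1)}$ when $d\geq5$, i.e.\ exactly $|\Ccal(4)|\ll\Ncal^{3-\alpha(d)+\epsilon}$). So I would first state that the case $\ell=4$ is precisely the hypothesis we are allowed to assume from the cited decoupling results, and then prove the lemma for all $\ell\geq 5$ by induction on $\ell$, feeding $\ell=4$ as the base case.

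For the inductive step, suppose the bound holds for some $\ell\geq 4$; I want to bound $|\Ccal(\ell+1)|$. The key observation is the trivial sub-additivity / projection estimate: given $(\mu_1,\ldots,\mu_{\ell+1})\in\Ccal(\ell+1)$, the partial sum $\nu=\mu_1+\cdots+\mu_{\ell-1}=-(\mu_\ell+\mu_{\ell+1})$ is determined by $\mu_\ell,\mu_{\ell+1}$ and lies in the difference set, while $(\mu_1,\ldots,\mu_{\ell-1},-\nu)$ ranges over $\ell$-tuples in $\Ecal^{\ell-1}\times\ZZ^d$ summing to zero. Concretely, I would split according to whether $\mu_\ell=-\mu_{\ell+1}$ or not. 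If $\mu_\ell=-\mu_{\ell+1}$, the remaining $(\mu_1,\ldots,\mu_{\ell-1})$ is a free element of $\Ecal^{\ell-1}$ subject to $\mu_1+\cdots+\mu_{\ell-1}=0$, which is $|\Ccal(\ell-1)|$, and there are $\Ncal$ choices for the antipodal pair, giving $\Ncal\cdot|\Ccal(\ell-1)|\ll\Ncal^{\ell-1-\alpha(d)+\epsilon}\ll\Ncal^{\ell-\alpha(d)+\epsilon}$; here I would use the convention $|\Ccal(2)|=\Ncal$, $|\Ccal(3)|\ll\Ncal$ (each $\mu_3=-\mu_1-\mu_2$ must itself lie on the sphere, a codimension-one condition which is lossless here), and more generally the crude bound $|\Ccal(j)|\ll\Ncal^{j-1}$ for all $j\geq 2$. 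If $\mu_\ell\neq-\mu_{\ell+1}$, then $\nu:=\mu_\ell+\mu_{\ell+1}\neq0$ and $(\mu_1,\ldots,\mu_{\ell-1},-\nu)$ has its first $\ell-1$ coordinates in $\Ecal$ and sums to zero; fixing $\nu$ first, the number of representations $\nu=\mu_\ell+\mu_{\ell+1}$ with both in $\Ecal$ is at most $\Ncal$ trivially, and $\mu_1+\cdots+\mu_{\ell-1}=\nu$ has at most... — this is where one must be a little careful, so instead I would directly dominate: fix $\mu_1,\ldots,\mu_{\ell-2}\in\Ecal$ freely ($\Ncal^{\ell-2}$ choices), which forces $\mu_{\ell-1}+\mu_\ell+\mu_{\ell+1}=-(\mu_1+\cdots+\mu_{\ell-2})=:w$; the number of triples $(\mu_{\ell-1},\mu_\ell,\mu_{\ell+1})\in\Ecal^3$ with prescribed sum $w$ is $\ll|\Ccal(4)|/\Ncal\ll\Ncal^{2-\alpha(d)+\epsilon}$ — because translating, the number of solutions of $\xi_1+\xi_2+\xi_3=w$, $\xi_i\in\Ecal$, summed over all $w\in\Ecal$ equals $|\Ccal(4)|$ (append $\xi_4=-w$... no, $w\notin\Ecal$ in general), so more honestly one bounds $\#\{(\xi_1,\xi_2,\xi_3)\in\Ecal^3:\xi_1+\xi_2+\xi_3=w\}$ uniformly in $w$ by the standard fact that this is $\ll|\Ccal(4)|^{1/2}\Ncal^{o(1)}$ via Cauchy–Schwarz on the additive energy, or more simply by noting it is at most $\max_w r_3(w)\leq$ (number of $(\xi_1,\xi_2)$ with $\xi_1+\xi_2$ in a single fibre)$\,\leq\Ncal$, giving the weaker $\Ncal^{\ell-1}$ — which would not suffice. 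I therefore expect the induction to be set up slightly differently.

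The clean way, and the one I would actually carry out: to bound $|\Ccal(\ell+1)|$ with the gain, fix $\mu_1,\ldots,\mu_{\ell-3}\in\Ecal$ arbitrarily ($\Ncal^{\ell-3}$ choices) and observe that $(\mu_{\ell-2},\mu_{\ell-1},\mu_\ell,\mu_{\ell+1})$ must have prescribed sum $s=-(\mu_1+\cdots+\mu_{\ell-3})$; the number of such $4$-tuples in $\Ecal^4$ with fixed sum $s$ is, uniformly in $s$, at most $|\Ccal(4)|$ itself (this uniform bound is the trivial remark that for $s=0$ it is exactly $|\Ccal(4)|$ and for other $s$ one has a translate with no more solutions — which one proves by the usual rearrangement $\xi_1+\xi_2=s-\xi_3-\xi_4$ and counting coincidences, or simply cites that $\max_s\#\{\xi_i\in\Ecal:\sum\xi_i=s\}\ll|\Ccal(4)|\Ncal^{\epsilon}$ follows from Cauchy–Schwarz since $\sum_s\#\{\ldots\}^2=\#\{6\text{-tuples}\}$ and the diagonal contribution is $|\Ccal(4)|$...). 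Granting the uniform bound $\#\{(\xi_1,\xi_2,\xi_3,\xi_4)\in\Ecal^4:\sum\xi_i=s\}\ll\Ncal^{3-\alpha(d)+\epsilon}$, we get $|\Ccal(\ell+1)|\ll\Ncal^{\ell-3}\cdot\Ncal^{3-\alpha(d)+\epsilon}=\Ncal^{\ell-\alpha(d)+\epsilon}$, which is exactly the claim for $\ell+1$. The base cases $\ell=4$ (hypothesis) and the trivial $\ell=5,6$ can be checked directly to start the recursion, and for $\ell\geq 4$ one also needs the harmless bounds $|\Ccal(2)|=\Ncal$, $|\Ccal(3)|\leq\Ncal$.

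\textbf{Main obstacle.} The only real content beyond bookkeeping is the \emph{uniform-in-$s$} version of the $4$-correlation bound: passing from $\#\{\sum_{i=1}^4\xi_i=0\}$ to $\sup_s\#\{\sum_{i=1}^4\xi_i=s\}$ with the same power of $\Ncal$. This is standard — it follows from decoupling applied to the extension operator tested against an arbitrary point $s$ rather than $0$, or alternatively from a Cauchy–Schwarz argument bounding the sup by $|\Ccal(6)|^{1/2}$ and then invoking a (slightly weaker, still sufficient) $6$-correlation estimate — but it is the step where one must be precise about which form of the Bourgain–Demeter input one is quoting, and it is worth isolating as a separate lemma or a remark in Section~\ref{sec:lin_corrs}. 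Everything else is the elementary induction above.
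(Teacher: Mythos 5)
Your argument is essentially the paper's: quote Bourgain--Demeter for $\ell=4$ and reduce longer correlations to it with a trivial loss of $\Ncal^{\ell-4}$; the paper phrases the reduction as $|\Ccal(\ell)|=\int_{\TT^d}\bigl|\sum_{\mu\in\Ecal}e(\mu\cdot x)\bigr|^{\ell}dx\le\Ncal^{\ell-4}\int_{\TT^d}\bigl|\sum_{\mu\in\Ecal}e(\mu\cdot x)\bigr|^{4}dx=\Ncal^{\ell-4}|\Ccal(4)|$, which is precisely your ``fix $\ell-4$ frequencies freely'' count written in integral form. The uniform-in-$s$ bound you isolate as the main obstacle is in fact immediate: since $\Ecal=-\Ecal$ the exponential sum is real, so $\bigl(\sum_{\mu\in\Ecal}e(\mu\cdot x)\bigr)^{4}\ge0$ and its Fourier coefficient at any $s$ is at most its value at $s=0$, i.e.\ $\sup_{s}\#\{(\xi_1,\dots,\xi_4)\in\Ecal^4:\xi_1+\xi_2+\xi_3+\xi_4=s\}\le|\Ccal(4)|$, so no separate lemma, induction, or base cases are needed. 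The only other caveat is that your incidental claim $|\Ccal(3)|\le\Ncal$ is not justified (and can fail badly for even $m$), but it plays no role in your final argument.
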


\begin{proof}
    When $d\geq 5$, the result for $\ell=4$ follows immediately from~\cite[Theorem~2.7]{bourgain2015_annals},
    where we pick $n=d$ and $p=4$ in their theorem.
    Notice that in~\cite{bourgain2015_annals} the radius is $N$.
    The case $d=4$ and $\ell=4$ follows from~\cite[Theorem~3.1]{bourgain_new_2015}.
    For $\ell\geq 4$, we simply observe that
    \[|\Ccal(\ell)|=\int_{\TT^{d}}\Bigl\lvert
        \sum_{\mu\in\Ecal}e(\langle x,\mu\rangle)\Bigr\rvert^{\ell}dx\ll
        \Ncal^{\ell-4}\int_{\TT^{d}}\Bigl\lvert
        \sum_{\mu\in\Ecal}e(\langle x,\mu\rangle)\Bigr\rvert^{4}dx
    \ll \Ncal^{\ell-1-\alpha(d)+\epsilon},\]
    as required.
\end{proof}

%%%%%%%%%%%%%%%
%% SECTION 3 %%

\section{Proof of Proposition \ref{intro:proposition}}

In this section we prove Proposition \ref{intro:proposition}
under the assumption of two technical results that will be established
in sections \ref{S4} and \ref{S:integrals}.
Our argument follows a method successfully employed
in \cite{krishnapur_nodal_2013} and \cite{benatar_random_2017},
which itself builds on ideas from
\cite{oravecz_leray_2008} and \cite{rudnick_volume_2008}.
The starting point is a detailed analysis of the Kac--Rice formulas,
which allows us to express the variance $\Var(\Vcal)$
in terms of the covariance function
\begin{equation}\label{def:r}
r(x) = \frac{1}{\Ncal} \sum_{\mu\in\Ecal} e(\mu\cdot x)
\end{equation}
and its derivatives.
This is possible outside a \emph{singular} subset of the torus $\TT^d$,
whose measure is estimated again in terms of $r(x)$.
In particular, one requires the evaluation of the absolute moments
\begin{equation}\label{Rell}
    \Rcal(\ell) := \int_{\TT^d} |r(x)|^\ell\, dx = \frac{|\Ccal(\ell)|}{\Ncal^\ell},
\end{equation}
and of various sums over the sets of linear correlations $\Ccal(\ell)$,
with $\ell=2,4,6$.
Extracting the main term coming from the degenerate terms in these sums
and estimating the contribution of the non-degenerate ones
leads to the proof of Proposition \ref{intro:proposition}.
Notice that Lemma~\ref{lemma:corrs} immediately tells us that
\begin{equation}\label{eq:rlbound}
    \Rcal(\ell)\ll \Ncal^{-1-\alpha(d)+\epsilon}.
\end{equation}

\subsection{Kac--Rice formulas}
Let $x,y\in\TT^d$, and denote by $\Phi_{(F(x),F(y))}$ the joint density function
of the vector $(F(x),F(y))$. We work with the normalised \emph{two-point correlation}
(also called the ``second intensity'')
\[
    K_2(x) = \frac{d}{E} \cdot \Phi_{(F(x),F(y))}(0,0) \cdot \EE[\|\nabla F(y)\|\cdot\|\nabla F(x+y)\| : \; F(y)=F(x+y)=0].
\]
The Kac--Rice formulas (see e.g. \cite[\S6.2.1]{AWbook}) relate $\EE[\Vcal^2]$
to the function $K_2(x)$ by
\[
    \EE[\Vcal^2] = \frac{E}{d} \int_{\TT^d} K_2(x)\, dx.
\]
In particular, we have the following expression for the variance.
\begin{lemma}\label{lemmapage5}
    Let $\Gcal_d$ be as in \eqref{starstar}. Then we have
    \[
        \Var(\Vcal) = \frac{E}{d} \int_{\TT^d} \Bigl(K_2(x)-\frac{\Gcal_d^2}{4\pi^2}\Bigr) dx.
    \]
\end{lemma}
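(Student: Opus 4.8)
The plan is to combine the Kac--Rice expression $\EE[\Vcal^2]=\frac{E}{d}\int_{\TT^d}K_2(x)\,dx$ with the formula~\eqref{starstar} for $\EE[\Vcal]$ and subtract. First I would recall that by definition $\Var(\Vcal)=\EE[\Vcal^2]-\EE[\Vcal]^2$, so it suffices to show that
\[
    \EE[\Vcal]^2 = \frac{E}{d}\int_{\TT^d}\frac{\Gcal_d^2}{4\pi^2}\,dx
    = \frac{E}{d}\cdot\frac{\Gcal_d^2}{4\pi^2},
\]
using $\meas(\TT^d)=1$. Since $E=4\pi^2 m$, the right-hand side is exactly $\Gcal_d^2\cdot\frac{m}{d}$, which matches $\EE[\Vcal]^2$ by squaring~\eqref{starstar}. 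Inserting this identity into $\EE[\Vcal^2]-\EE[\Vcal]^2$ and pulling the constant $\frac{\Gcal_d^2}{4\pi^2}$ under the integral sign (again using that the torus has unit measure) yields precisely the claimed expression
\[
    \Var(\Vcal)=\frac{E}{d}\int_{\TT^d}\Bigl(K_2(x)-\frac{\Gcal_d^2}{4\pi^2}\Bigr)dx.
\]
This is essentially a bookkeeping step once the Kac--Rice formula for the second moment is in hand.

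The one genuine point requiring care is the interchange of the Kac--Rice identity with the subtraction: one must check that $K_2(x)$ is integrable on $\TT^d$ and that the single-point Kac--Rice formula gives $\EE[\Vcal]$ consistently, i.e.\ that the normalisation of $K_2$ (the factor $d/E$ and the conditional expectation of the product of gradient norms) is compatible with the normalisation behind~\eqref{starstar}. Integrability of $K_2$ follows because the Gaussian field $F$ is smooth and nondegenerate away from the singular set, and the density $\Phi_{(F(x),F(y))}(0,0)$ together with the conditional gradient expectation are bounded except near the diagonal and the singular set, where standard Kac--Rice regularity (see~\cite[\S6.2.1]{AWbook}) guarantees the local integrability of the $(d-1)$-dimensional nodal measure squared. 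The constant $\Gcal_d^2/(4\pi^2)$ is identified as the limiting value of $K_2(x)$ when $x$ is ``generic'' (so that $F(x)$ and $F(y)$ decorrelate), which is exactly the square of the one-point intensity $\frac{d}{E}\cdot\frac{1}{\sqrt{2\pi}}\cdot\EE[\|\nabla F\|]$ underlying~\eqref{starstar}; subtracting this constant is what isolates the fluctuating part.

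I do not expect a serious obstacle here: the lemma is a direct consequence of the Kac--Rice machinery and the known value of $\EE[\Vcal]$. If anything, the main subtlety is purely notational, namely tracking the factors of $E$, $d$, $2\pi$ and $\Gcal_d$ through the definition of $K_2$ so that the constant $\Gcal_d^2/(4\pi^2)$ emerges with the right normalisation; this is the sort of computation carried out in~\cite[\S6.2.1]{AWbook} and in the analogous steps of~\cite{rudnick_volume_2008} and~\cite{krishnapur_nodal_2013}, and it transfers verbatim to dimension $d\geq 4$.
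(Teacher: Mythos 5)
Your proposal is correct and follows essentially the same route as the paper: the Kac--Rice identity $\EE[\Vcal^2]=\frac{E}{d}\int_{\TT^d}K_2(x)\,dx$ stated just before the lemma, the formula \eqref{starstar} for $\EE[\Vcal]$, the relation $E=4\pi^2 m$ together with $\meas(\TT^d)=1$, and $\Var(\Vcal)=\EE[\Vcal^2]-\EE[\Vcal]^2$. The extra remarks on integrability and the normalisation of $K_2$ are fine but not needed beyond what the cited Kac--Rice framework already provides.
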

\begin{proof}
    Recall that $E=4\pi^2m$, and that from \eqref{starstar}
    we have $\EE[\Vcal]=\Gcal_d (m/d)^{1/2}$.
    The result then follows from the identity
    $\Var(\Vcal)=\EE[\Vcal^2]-\EE[\Vcal]^2$.
\end{proof}

Our goal is to express $K_2(x)$ in a more practical form
as a function of $r(x)$ and its derivatives.
To this purpose let us introduce the gradient and the Hessian of $r(x)$,
which we denote by $D$ and $H$, respectively. Thus we write
\begin{align}
    D(x) &:= \nabla r(x) =
    \frac{2\pi i}{\Ncal} \sum_{\mu\in\Ecal} e(\mu\cdot x) \mu,\label{def:D}\\
    H(x) &:= -\frac{4\pi^2}{\Ncal} \sum_{\mu\in\Ecal} e(\mu\cdot x) \;
    \mu^t\,\mu.\label{def:H}
\end{align}
Note that $D$ is a $d$-dimensional vector with entries
\[
    D_j(x) = \frac{2\pi i}{\Ncal} \sum_{\mu\in\Ecal} e(\mu\cdot x) \mu_j,
\]
and $H$ is a $d\times d$ matrix with entries
\[
    H_{jk}(x) = -\frac{4\pi^2}{\Ncal} \sum_{\mu\in\Ecal} e(\mu\cdot x) \mu_j\mu_k.
\]
Moreover, we define the quantities
\begin{equation}\label{def:XY}
    \begin{split}
        X(x) &= - \frac{d}{E} \frac{1}{1-r^2} D^t\cdot D,
        \\
        Y(x) &= - \frac{d}{E} \Bigl(H + \frac{r}{1-r^2} D^t\cdot D\Bigr),
    \end{split}
\end{equation}
and we let $\Omega$ be the $2d\times 2d$ block matrix
\begin{equation}\label{def:Omega}
    \Omega = \mathbb{I}_{2d} + \begin{pmatrix}X&Y\\ Y&X \end{pmatrix},
\end{equation}
where $\II_n$ denotes the identity matrix of dimension $n$.
With this notation we have the following result,
which gives a first simplification of $K_2$.

\begin{lemma}\label{lemmapage7}
    Let $X,Y,\Omega$ be as in \eqref{def:XY} and \eqref{def:Omega}. Then we have
    \begin{equation}\label{bullet}
        K_2(x) = \frac{1}{2\pi\sqrt{1-r^2(x)}} \; \EE[\|w_1\|\cdot\|w_2\|],
    \end{equation}
    where $w_1,w_2$ are $d$-dimensional random vectors with
    Gaussian distribution $(w_1,w_2)\sim \Ncal(0,\Omega)$.
\end{lemma}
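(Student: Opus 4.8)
The plan is to reduce the formula for $K_2(x)$ to a finite-dimensional Gaussian computation involving the centred Gaussian vector $(F(0),F(x),\nabla F(0),\nabla F(x))\in\RR^{2+2d}$. Since the $a_\mu$ are jointly Gaussian and $a_{-\mu}=\overline{a_\mu}$ (normalised so that $\EE|a_\mu|^2=1$), the field $F$ is stationary and $\EE[F(u)F(v)]=r(u-v)$; in particular the law of $(F(y),F(x+y),\nabla F(y),\nabla F(x+y))$ does not depend on $y$, so $K_2(x)$ depends only on the separation and we may evaluate everything with $y=0$. All entries of the covariance matrix of our vector are then obtained by differentiating $r$.

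Concretely, the pair $(F(0),F(x))$ has covariance $\bigl(\begin{smallmatrix}1&r\\ r&1\end{smallmatrix}\bigr)$ (here $r=r(x)$), so the density factor is $\Phi_{(F(0),F(x))}(0,0)=\bigl(2\pi\sqrt{1-r^2}\bigr)^{-1}$. Using that $r$ is even I get $\EE[F(0)\,\nabla F(0)^t]=\EE[F(x)\,\nabla F(x)^t]=\nabla r(0)=0$, while $\EE[F(0)\,\nabla F(x)^t]=D(x)$ and $\EE[F(x)\,\nabla F(0)^t]=-D(x)$ with $D$ as in \eqref{def:D}; and $\EE[\nabla F(0)\,\nabla F(0)^t]=\EE[\nabla F(x)\,\nabla F(x)^t]=-H(0)=\frac{E}{d}\II_d$, where the last equality uses $\sum_{\mu\in\Ecal}\mu_j\mu_k=\frac{m\Ncal}{d}\delta_{jk}$, while $\EE[\nabla F(0)\,\nabla F(x)^t]=-H(x)$ with $H$ as in \eqref{def:H}.

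Next I compute the conditional law of $(\nabla F(0),\nabla F(x))$ given $F(0)=F(x)=0$. Since the whole vector is jointly Gaussian and centred, this conditional law is centred Gaussian with covariance the Schur complement $\Sigma_{22}-\Sigma_{21}\Sigma_{11}^{-1}\Sigma_{12}$, where $\Sigma_{11}=\bigl(\begin{smallmatrix}1&r\\ r&1\end{smallmatrix}\bigr)$, $\Sigma_{12}=\Sigma_{21}^t$ collects the value--gradient covariances, and $\Sigma_{22}$ the gradient--gradient covariances found above. Using $\Sigma_{11}^{-1}=(1-r^2)^{-1}\bigl(\begin{smallmatrix}1&-r\\ -r&1\end{smallmatrix}\bigr)$ and carrying out the block multiplication, the subtracted matrix has diagonal blocks $\frac{1}{1-r^2}D^tD$ and off-diagonal blocks $\frac{r}{1-r^2}D^tD$, so that the off-diagonal and diagonal corrections assemble exactly into $Y$ and $X$ of \eqref{def:XY}, giving
\[
\Sigma_{22}-\Sigma_{21}\Sigma_{11}^{-1}\Sigma_{12}
=\frac{E}{d}\begin{pmatrix}\II_d+X & Y\\ Y & \II_d+X\end{pmatrix}=\frac{E}{d}\,\Omega .
\]
Hence if $(w_1,w_2)\sim\Ncal(0,\Omega)$ then $\sqrt{E/d}\,(w_1,w_2)$ has the conditional law of $(\nabla F(0),\nabla F(x))$ given $F(0)=F(x)=0$, so $\EE[\|\nabla F(0)\|\cdot\|\nabla F(x)\| : F(0)=F(x)=0]=\frac{E}{d}\,\EE[\|w_1\|\cdot\|w_2\|]$. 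Multiplying the three factors in the definition of $K_2(x)$ yields $K_2(x)=\frac{d}{E}\cdot\frac{1}{2\pi\sqrt{1-r^2(x)}}\cdot\frac{E}{d}\,\EE[\|w_1\|\cdot\|w_2\|]=\frac{1}{2\pi\sqrt{1-r^2(x)}}\,\EE[\|w_1\|\cdot\|w_2\|]$, which is \eqref{bullet}.

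The argument is routine Gaussian bookkeeping, but three points need care: the signs in the off-diagonal value--gradient and gradient--gradient blocks (differentiating $r(u-v)$ in the second slot introduces a minus sign, so $D$ and $H$ enter with the signs above); the normalisation of the $a_\mu$, which feeds the constant $E/d$ through $\EE[\nabla F(0)\,\nabla F(0)^t]$; and the fact that the regression formula for the conditional law requires $\Sigma_{11}$ to be invertible, i.e.\ $r(x)^2<1$, which holds away from the diagonal of $\TT^d$ — the only regime in which $K_2(x)$ is used. I expect the Schur-complement identity to be the only mildly delicate step, since one must verify that the scalar factors $1/(1-r^2)$ and $r/(1-r^2)$ land precisely where the definitions of $X$ and $Y$ put them.
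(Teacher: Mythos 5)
Your proof is correct. The paper itself does not carry out this computation: its ``proof'' of Lemma~\ref{lemmapage7} is a one-line citation to Proposition~4.2 of Benatar--Maffucci \cite{benatar_random_2017}, which in turn is the $d$-dimensional version of the Kac--Rice conditioning argument of Krishnapur--Kurlberg--Wigman. What you have written is essentially that argument made self-contained: stationarity to reduce to the pair $(F(0),F(x))$, the density factor $\Phi(0,0)=(2\pi\sqrt{1-r^2})^{-1}$, the value--gradient and gradient--gradient covariances $0$, $\pm D(x)$, $-H(x)$, $-H(0)=\tfrac{E}{d}\II_d$ (the last via $\sum_{\mu}\mu_j\mu_k=\tfrac{m\Ncal}{d}\delta_{jk}$, which uses the invariance of $\Ecal$ under coordinate sign flips), and then Gaussian regression: the Schur complement of the $2\times 2$ value block produces exactly $\tfrac{E}{d}\,\Omega$ with $X,Y$ as in \eqref{def:XY}, so rescaling by $\sqrt{E/d}$ cancels the prefactor $d/E$ in the definition of $K_2$. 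Your sign bookkeeping (odd $D$, even $H$, hence $\EE[F(0)\nabla F(x)^t]=D$, $\EE[F(x)\nabla F(0)^t]=-D$) is what makes the off-diagonal block come out as $Y$ rather than with the opposite sign in front of $\tfrac{r}{1-r^2}D^tD$, and you handle it correctly; you also correctly note the restriction $r(x)^2<1$, which is why the formula is only used off the singular set. The trade-off is the obvious one: the paper's citation keeps the note short and defers the (standard but sign-sensitive) Gaussian bookkeeping to the literature, whereas your derivation verifies the normalisation $E/d$ and the sign conventions explicitly, which is precisely where such computations usually go wrong.
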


\begin{proof}
    See \cite[Proposition 4.2]{benatar_random_2017}.
\end{proof}

\subsection{The singular set}\label{S3:singularset}
In order to further simplify $K_2$, we remove
a subset of the torus where the covariance function is large.
We show here that the contribution of this subset
will be absorbed into the error in Proposition~\ref{intro:proposition}.
The definitions and results in this subsection
are borrowed directly from \cite[\S6.1]{oravecz_leray_2008}
(see also \cite[\S4.1]{krishnapur_nodal_2013} and \cite[\S5.1]{benatar_random_2017}).

We say that a point $x\in\TT^d$ is \emph{positive singular} (respectively, \emph{negative singular})
if there exists a subset $\Ecal_x\subseteq\Ecal$ with density $|\Ecal_x|/|\Ecal|>1-\frac{1}{4d}$
such that $\cos(2\pi\mu\cdot x)>\frac{3}{4}$ (resp. $\cos(2\pi\mu\cdot x)<-\frac{3}{4}$),
for all $\mu\in\Ecal_x$.

Take $q\asymp \sqrt{m}$ and partition the torus into $q^d$ cubes,
each centred at $a/q$, $a\in\ZZ^d$, of side length $1/q$.
A cube $Q$ so constructed will be called positive singular (respectively, negative singular)
if it contains a positive (resp. negative) singular point.
We define the union of the singular cubes to be the singular set, and we denote it by $\Scal$.

\begin{lemma}\label{lemma:singular1}
    Let $\Scal\subseteq\TT^d$ denote the singular set introduced above.
    Then we have
    \[
        i)\; \int_{\Scal} K_2(x)\, dx \ll \meas(\Scal)
        \qquad\text{and}\qquad
        ii)\;\; \meas(\Scal) \leq (16)^\ell \, \Rcal(\ell),
    \]
    where $\ell$ is any non-negative integer and $\Rcal(\ell)$ is as in \eqref{Rell}.
\end{lemma}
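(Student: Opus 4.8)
Both parts are adaptations of the argument in \cite[\S6.1]{oravecz_leray_2008}, and I sketch the steps; I treat $ii)$ first, as it is the more elementary. For $ii)$ the plan is to bound $\meas(\Scal)$ by the measure of a super-level set of $|r|$ and then apply Markov's inequality. The key point is that $|r|$ is bounded below by an absolute constant on every singular cube. Indeed, if $x^{*}$ is a singular point, then splitting the sum defining $r(x^{*})$ in \eqref{def:r} according to membership in $\Ecal_{x^{*}}$ and using $\cos(2\pi\mu\cdot x^{*})>\tfrac34$ on $\Ecal_{x^{*}}$ together with $|\Ecal_{x^{*}}|/\Ncal>1-\tfrac1{4d}$ gives $|r(x^{*})|\geq c_{1}$ for an absolute $c_{1}>\tfrac12$ when $d\geq4$. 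Since $D=\nabla r$ satisfies $\lVert D\rVert_{\infty}\ll\sqrt m$ by \eqref{def:D} and each cube has diameter $\ll q^{-1}\asymp m^{-1/2}$, choosing the implied constant in $q\asymp\sqrt m$ large enough keeps the oscillation of $r$ on each cube below $c_{1}-\tfrac12$, so $|r(x)|>\tfrac12$ throughout $\Scal$. Hence $\meas(\Scal)\leq\meas\{x:|r(x)|>\tfrac12\}\leq 2^{\ell}\int_{\TT^{d}}|r(x)|^{\ell}\,dx=2^{\ell}\Rcal(\ell)\leq 16^{\ell}\Rcal(\ell)$.

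For $i)$ the first step is the pointwise bound $K_{2}(x)\ll(1-r^{2}(x))^{-1/2}$. From Lemma~\ref{lemmapage7}, Cauchy--Schwarz and the equality of the two diagonal blocks of $\Omega$ one gets $\EE[\lVert w_{1}\rVert\cdot\lVert w_{2}\rVert]\leq\EE[\lVert w_{1}\rVert^{2}]=\tr(\II_{d}+X)=d+\tr X$; since $-X=\tfrac{d}{E}(1-r^{2})^{-1}D^{t}D$ is positive semidefinite we have $\tr X\leq0$, and therefore
\[
    K_{2}(x)\leq\frac{d}{2\pi\sqrt{1-r^{2}(x)}}.
\]
It then remains to prove $\int_{\Scal}(1-r^{2})^{-1/2}\,dx\ll\meas(\Scal)$. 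I would split $\Scal$ into the $O(1)$ cubes lying within distance $O(m^{-1/2})$ of a point $x_{0}$ with $r(x_{0})=\pm1$ (there are only $O(1)$ such $x_{0}$, uniformly in $m$, since $\Ecal$ generates a finite-index sublattice of $\ZZ^{d}$), and the remaining cubes. On a cube of the first kind, a Taylor expansion using the identity $\sum_{\mu\in\Ecal}\mu_{j}\mu_{k}=\tfrac{m}{d}\Ncal\,\delta_{jk}$ gives $1-r^{2}(x)\asymp m\lvert x-x_{0}\rvert^{2}$, so that $\int(1-r^{2})^{-1/2}\,dx\ll m^{-1/2}\int_{|y|\ll m^{-1/2}}|y|^{-1}\,dy\ll m^{-d/2}\asymp\meas(\text{cube})$. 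On the remaining cubes $|r|$ is bounded away from $1$, so $(1-r^{2})^{-1/2}=O(1)$ there and their total contribution is $\ll\meas(\Scal)$. Adding the two contributions and using $\meas(\Scal)\gg m^{-d/2}$ (it contains the cube through the origin) gives $i)$.

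The crux of the argument is the claim just used: that $|r|$ stays bounded away from $1$ on every singular cube not adjacent to an exact resonance $r=\pm1$ --- equivalently, that the super-level set $\{x:|r(x)|>1-\delta\}$ is confined to $O(m^{-1/2})$-neighbourhoods of the finitely many deep points. This is a quantitative anti-concentration statement for the exponential sum $r(x)$: one has to preclude the frequencies in $\Ecal$ conspiring to make $r$ nearly maximal away from a true resonance. It is precisely what is established in \cite[\S6.1]{oravecz_leray_2008} (and reproduced in \cite[\S4.1]{krishnapur_nodal_2013} and \cite[\S5.1]{benatar_random_2017}), which I would invoke directly; the inputs that make it go through are the size of $\Ncal$ from \eqref{eq:Nsize} and the Lipschitz bound $\lVert D\rVert_{\infty}\ll\sqrt m$.
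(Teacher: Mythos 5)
Your part $ii)$ is fine: the lower bound $|r|\geq c_1>\tfrac12$ at a singular point, the Lipschitz bound $\lVert \nabla r\rVert\ll\sqrt m$ propagating it to the whole singular cube, and Chebyshev's inequality reproduce exactly the mechanism behind the paper's citation of \cite[Lemma 6.5(ii)]{oravecz_leray_2008}. The pointwise reduction in $i)$ is also correct and clean: Cauchy--Schwarz plus the fact that $X\preceq 0$ (since $D$ is a real vector, so $D^tD\succeq 0$) gives $\EE[\lVert w_1\rVert\lVert w_2\rVert]\leq \tr(\II_d+X)\leq d$, hence $K_2(x)\ll (1-r^2(x))^{-1/2}$ off the set $\{|r|=1\}$.

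The gap is in how you then bound $\int_{\Scal}(1-r^2)^{-1/2}dx$. Your argument rests on the dichotomy that every singular cube either lies within $O(m^{-1/2})$ of one of finitely many points with $r=\pm1$, or else has $|r|$ bounded away from $1$ on it. Neither half of this is justified: (a) the claim that there are $O(1)$ exact resonances \emph{uniformly in $m$} amounts to a uniform bound on the index of the lattice generated by $\Ecal_m$ in $\ZZ^d$, which you assert but do not prove; and (b), more seriously, nothing rules out a singular cube on which $\max|r|=1-\eta$ with $\eta$ arbitrarily small (much smaller than any fixed $\delta$) without an exact resonance nearby --- on such a cube your ``remaining cubes'' step, which needs $(1-r^2)^{-1/2}=O(1)$, fails, and $(1-r^2)^{-1/2}$ can be as large as $\eta^{-1/2}$. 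You acknowledge this anti-concentration statement is the crux and attribute it to \cite[\S6.1]{oravecz_leray_2008}, but that is a misattribution: the cited argument (\S6.3--\S6.5 there, which is what the paper invokes) never locates or counts exact resonances. Instead it works cube by cube: on a positive singular cube $Q$ containing the singular point $x^*$, one uses $1-\cos(2\pi t)\gg \lVert t\rVert^2$ and the fact that for $\mu\in\Ecal_{x^*}$ the nearest integer $n_\mu$ to $\mu\cdot x$ is constant on $Q$ (by the same Lipschitz/constant-in-$q$ choice as in your part $ii)$) to get
\[
1-r(x)\;\gg\;\frac{1}{\Ncal}\sum_{\mu\in\Ecal_{x^*}}(\mu\cdot x-n_\mu)^2,
\]
a quadratic polynomial whose Hessian is $\gg m\,\II_d$ thanks to the exact identity $\sum_{\mu\in\Ecal}\mu_i\mu_j=\frac{m\Ncal}{d}\delta_{ij}$ and the density $|\Ecal_{x^*}|/\Ncal>1-\frac{1}{4d}$. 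Hence $1-r(x)\gg m|x-x_Q|^2$ on $Q$ for some (cube-dependent, not necessarily resonant) point $x_Q$, and $\int_Q(1-r^2)^{-1/2}dx\ll m^{-1/2}q^{-(d-1)}\asymp \meas(Q)$, with no global structure theorem about the super-level set $\{|r|>1-\delta\}$ needed. So your part $i)$ as written has a genuine hole; it is repaired either by invoking the reference for the integral bound itself (as the paper does) or by replacing your dichotomy with this per-cube quadratic lower bound, for which you already have all the ingredients.
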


\begin{proof}
    The first inequality was proved in \cite[\S6.3--\S6.5]{oravecz_leray_2008}.
    On the other hand we have $|r(x)|\geq 1/16$ for $x\in\Scal$
    by \cite[Lemma 6.5~$(ii)$]{oravecz_leray_2008}, from which the second estimate
    follows.
\end{proof}

Outside the singular set, the covariance function stays bounded away from one.
By \cite[Lemma 6.5$(i)$]{oravecz_leray_2008} we have, for all $x\notin\Scal$,
\begin{equation}\label{1902:eq001}
    |r(x)| \leq 1 - \frac{1}{16d}.
\end{equation}

\subsection{The non-singular set}
We now analyse the function $K_2(x)$ outside the singular set.
By \eqref{1902:eq001} the covariance function is
bounded away from one, and it is therefore possible to use the Taylor
expansion to approximate
\begin{equation}\label{1302:eq003}
    \frac{1}{\sqrt{1-r^2}} = 1 + \frac{1}{2}r^2 + \frac{3}{8} r^4 + O(r^6).
\end{equation}
As for the expectation $\EE[\|w_1\|\cdot \|w_2\|]$ in Lemma \ref{lemmapage7},
we prove the following explicit expression in terms of the functions
$X$ and $Y$ from \eqref{def:XY}. Since these are defined in terms of
$r,D$ and $H$, we indirectly arrive at a formula that involves only
the covariance function and its derivatives.

\begin{lemma}\label{1302:lemma001}
    Let $w_1,w_2$ be random vectors with Gaussian
    distribution $(w_1,w_2)\sim\Ncal(0,\Omega)$,
    where $\Omega$ is as in \eqref{def:Omega}. Then
    \begin{equation}\label{1702:eq001}
        \begin{split}
            \EE[\|w_1\|\cdot \|w_2\|]
            =
            \frac{\Gcal_d^2}{2\pi}
            \biggl[
            &A_0 + A_1\tr(X) + A_2\tr(Y^2) + A_3\tr(XY^2) + A_4\tr(X^2)
            \\
            +\,&
            A_5\tr(Y^4) + A_6\tr(Y^2)^2 + A_7\tr(X)\tr(Y^2)
            +
        O(\tr(X^3)+\tr(Y^6))\biggr],
        \end{split}
    \end{equation}
    where $X$ and $Y$ are given in \eqref{def:XY},
    $\Gcal_d$ is defined in \eqref{starstar}, and
    \begin{equation}\label{2102:eq006}
        \begin{gathered}
            A_0 = 1,
            \quad
            A_1 = \frac{1}{d},
            \quad
            A_2 = \frac{1}{2d^2},
            \quad
            A_3 = -\frac{1}{d^2(d+2)},
            \\
            A_4 = -\frac{d-1}{2d^2(d+2)},
            \quad
            A_5 = 2A_6 = \frac{1}{4d^2(d+2)^2},
            \quad
            A_7 = -\frac{1}{2d^2(d+2)}.
        \end{gathered}
    \end{equation}
\end{lemma}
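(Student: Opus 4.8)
\emph{Proof strategy.} The plan is to treat $X$ and $Y$ as small symmetric matrices — which \eqref{1902:eq001} justifies off the singular set — and to expand $\EE[\|w_1\|\cdot\|w_2\|]$ to total degree $4$ in $(X,Y)$. Writing $M=\begin{pmatrix}X&Y\\ Y&X\end{pmatrix}$ so that $\Omega=\II_{2d}+M$, one represents $(w_1,w_2)=\Omega^{1/2}(g_1,g_2)$ with $g_1,g_2$ independent standard Gaussian vectors in $\RR^d$, and expands
\[
\Omega^{1/2}=\II_{2d}+\tfrac12M-\tfrac18M^2+\tfrac1{16}M^3-\tfrac5{128}M^4+\cdots,
\qquad
M^2=\begin{pmatrix}X^2+Y^2&XY+YX\\ XY+YX&X^2+Y^2\end{pmatrix},
\]
and so on. This produces $w_1$ — and, by the symmetry $g_1\leftrightarrow g_2$, also $w_2$ — as an explicit polynomial in $X,Y$ applied to $g_1,g_2$; squaring, $\|w_i\|^2=\|g_i\|^2(1+t_i)$ with $t_i$ of total degree $\ge1$, so
\[
\|w_1\|\cdot\|w_2\|=\|g_1\|\,\|g_2\|\,\bigl(1+\tfrac12t_1-\tfrac18t_1^2+\cdots\bigr)\bigl(1+\tfrac12t_2-\tfrac18t_2^2+\cdots\bigr).
\]
Multiplying out and keeping the terms of total degree $\le4$ leaves a finite sum; I denote the remainder by $\Rcal$. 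One structural remark saves labour: because $\Omega=\begin{pmatrix}\II+X&Y\\ Y&\II+X\end{pmatrix}$, the vectors $w_1+w_2$ and $w_1-w_2$ are independent Gaussians with covariances $2(\II+X\pm Y)$, and $\|w_1\|\cdot\|w_2\|$ is symmetric under $w_1+w_2\leftrightarrow w_1-w_2$; hence $\EE[\|w_1\|\cdot\|w_2\|]$ is even in $Y$, so every odd-in-$Y$ invariant ($\tr Y$, $\tr(XY)$, $\tr X\,\tr Y$, $\tr(XY^3),\ldots$) drops out.

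To evaluate the expectation of the finite sum I would pass to polar coordinates $g_i=R_i\theta_i$, with $R_i=\|g_i\|\sim\chi_d$ and $\theta_i$ uniform on $\SS$, all independent. Each term factors as a radial moment $\EE[R_1^aR_2^b]=\EE[R^a]\EE[R^b]$, with $\EE[R^k]=2^{k/2}\Gamma(\tfrac{d+k}2)/\Gamma(\tfrac d2)$ — valid also for negative $k$, since $R>0$ almost surely — times a spherical average, which reduces, via
\[
\EE[\theta_j\theta_k]=\frac{\delta_{jk}}d,\qquad
\EE[\theta_j\theta_k\theta_l\theta_m]=\frac{\delta_{jk}\delta_{lm}+\delta_{jl}\delta_{km}+\delta_{jm}\delta_{kl}}{d(d+2)}
\]
and its degree-six analogue (denominator $d(d+2)(d+4)$, numerator the sum of $15$ pairings), to a rational combination of $\tr X$, $\tr(X^2)$, $(\tr X)^2$, $\tr(Y^2)$, $\tr(XY^2)$, $\tr X\,\tr(Y^2)$, $\tr(Y^4)$ and $(\tr Y^2)^2$. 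Since $X$ has rank one (being a multiple of $\nabla r\,(\nabla r)^{t}$), $(\tr X)^2=\tr(X^2)$ and these merge. The degree-zero term is $\EE[R]^2=\Gcal_d^2/(2\pi)$, which factors out and fixes $A_0=1$; the remaining coefficients are then a finite computation — the ratios $\EE[R^a]\EE[R^b]/\EE[R]^2$ are rational in $d$, since consecutive values of $\Gamma$ differ by rational factors — and should reproduce \eqref{2102:eq006}. (As a sanity check: the degree-two-in-$X$ contribution works out to $-\tfrac{d}{2d^2(d+2)}\tr(X^2)+\tfrac{1}{2d^2(d+2)}(\tr X)^2$, collapsing to $A_4\tr(X^2)$; and the identity $A_5=2A_6$ is forced by $\EE[(\theta^{t}Y^2\theta)^2]=\frac{(\tr Y^2)^2+2\tr(Y^4)}{d(d+2)}$.)

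Finally one must bound $\Rcal$. It is built from the terms of total degree $\ge5$ in $(X,Y)$, the degree-four terms left out of the main sum — those of shape $\tr(X^2Y^2)$, $\tr(X)\tr(XY^2)$, $\tr(X^2)\tr(Y^2)$ and $\tr(X^4)$ — and the degree-three term $\tr(X^3)$ itself; by the $Y$-parity there is no bare $\|Y\|_{\mathrm{op}}^{5}$-type contribution. I would bound each by $O(\tr(X^3)+\tr(Y^6))$, using that $X\preceq0$ has rank one, so that $\tr(X^k)=(\tr X)^k$ and $|\tr(X^aY^b)|\le|\tr X|^a\|Y\|_{\mathrm{op}}^b$ for $a\ge1$, that $|\tr X|$ and $\|Y\|_{\mathrm{op}}$ are bounded off the singular set, that $|\tr(Y^b)|\le d\|Y\|_{\mathrm{op}}^b$ and $\|Y\|_{\mathrm{op}}^6\le\tr(Y^6)$, and Young's inequality. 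I expect this final step — squeezing the remainder into exactly this shape rather than a slightly weaker one — to be the only genuinely delicate point: it leans on the sign and rank-one structure of $X$, and the implied constant may depend on $d$, which is harmless since in the sequel these quantities are only ever integrated against the small moments $\Rcal(\ell)$.
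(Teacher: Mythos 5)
Your route is genuinely different from the paper's. The paper never expands $\Omega^{1/2}$: following Berry (and \cite[Lemma A.1]{benatar_random_2017}), it first evaluates the Gaussian expectation in closed form,
$\EE[\|w_1\|\cdot\|w_2\|]=\frac{1}{2\pi}\iint_{[0,\infty)^2}\bigl(f(0,0)-f(t,0)-f(0,s)+f(t,s)\bigr)(ts)^{-3/2}\,dt\,ds$ with $f(t,s)=\det(\II_{2d}+J(t,s))^{-1/2}$, and only then Taylor-expands the smooth \emph{deterministic} function $f$ in $X,Y$ (block-determinant factorisation plus $\det(\II+P)^{-1/2}=1-\tfrac12\tr P+\tfrac14\tr(P^2)+\tfrac18(\tr P)^2+O(\cdot)$), finishing with absolutely convergent $t,s$-integrals that produce the Gamma-quotients behind $A_0,\dots,A_7$. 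Your main-term bookkeeping is consistent with that outcome: the radial factor $\EE[R]^2=\Gcal_d^2/(2\pi)$, the spherical moment identities, the use of $\rk X=1$ to merge $(\tr X)^2$ with $\tr(X^2)$, the parity-in-$Y$ argument via the independent vectors $w_1\pm w_2$, and your checks on $A_4$ and on $A_5=2A_6$ all match the paper.

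The genuine gap is in how you define and bound the remainder $\Rcal$. You expand $(\II+M)^{1/2}$ and $\sqrt{1+t_i}$ \emph{inside} the expectation, but neither expansion is pointwise legitimate: only $\Omega\succeq0$ and entrywise $X,Y=O(1)$ are known, so $\|M\|_{\mathrm{op}}<1$ is not guaranteed, and $t_i$ is a random quantity containing the unbounded ratio $\|g_2\|/\|g_1\|$, so ``keep total degree $\le4$ and call the tail $\Rcal$'' is not a valid decomposition without a truncation or Taylor-with-remainder argument. Worse, even formally the discarded terms are not termwise integrable in exactly the dimensions the lemma is used for: in polar coordinates the contribution of $t_1^n$ built from the quadratic-in-$Y$ piece $\tfrac{R_2^2}{R_1^2}\,\theta_2^{t}B^{t}B\,\theta_2$ carries the factor $R_1^{1-2n}$, and $\EE[R^{-k}]=\infty$ once $k\ge d$; so for $d=4$ the very first discarded orders already contain pieces with $R_1^{-4}$ and $R_1^{-5}$ (and $R_1^{-5}$ for $d=5$) whose absolute expectations diverge, while the degree-$4$ terms you keep are only marginally finite ($R_1^{-3}$). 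Hence ``bounding each term of $\Rcal$ by $O(\tr(X^3)+\tr(Y^6))$'' cannot be executed as described: the finiteness of the true remainder comes from resumming $\sqrt{1+t_i}$, which a termwise estimate throws away. To repair this you would need a genuine probabilistic truncation (e.g.\ splitting on an event such as $\{|t_i|\le 1/2\}$ and estimating its complement), or simply to compute the expectation exactly first, as the paper does, so that all expansions act on a deterministic function and the error is an absolutely convergent $t,s$-integral times $O(\tr(X^3)+\tr(Y^6))$.
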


Lemma \ref{1302:lemma001} is proved in the next section.
We assume it is true for now and proceed with the proof of
Proposition~\ref{intro:proposition}.
Since we can only expand $X$ and $Y$ outside the singular set, we need to be able
to extend our estimates to the full torus. This is possible due to the
following lemma which shows that $X$ and $Y$ are in fact uniformly bounded.
\begin{lemma}\label{1902:XYbouded}
    Let $X$ and $Y$ be given in \eqref{def:XY}. Then we have entrywise
    \[
        X,Y = O(1).
    \]
\end{lemma}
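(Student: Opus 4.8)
The plan is to bound the entries of $X$ and $Y$ by bounding the three basic ingredients $r$, $D$ and $H$ entrywise, and then using that $1-r^2$ is bounded away from zero whenever the formula is actually needed. For $r$ itself we trivially have $|r(x)|\leq 1$ for all $x\in\TT^d$. For the derivatives, the key observation is that $D$ and $H$ are not of size $O(1)$ on the nose — they carry factors of $m^{1/2}$ and $m$ respectively coming from the frequencies $\mu$ with $|\mu|^2=m$ — but these are exactly compensated by the factor $d/E = d/(4\pi^2 m)$ appearing in the definitions \eqref{def:XY}. Concretely, $|D_j(x)|\leq \frac{2\pi}{\Ncal}\sum_{\mu\in\Ecal}|\mu_j|\leq 2\pi\sqrt{m}$ since $|\mu_j|\leq|\mu|=\sqrt{m}$, and similarly $|H_{jk}(x)|\leq\frac{4\pi^2}{\Ncal}\sum_{\mu\in\Ecal}|\mu_j\mu_k|\leq 4\pi^2 m$. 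Thus $\frac{d}{E}\lVert D\rVert^2 \ll 1$ and $\frac{d}{E}\lVert H\rVert \ll 1$ entrywise, with implied constants depending only on $d$.

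The only remaining issue is the factor $\frac{1}{1-r^2}$ multiplying $D^t D$ in both $X$ and $Y$, which blows up as $|r|\to 1$. The point is that we only ever need $X$ and $Y$ to be bounded where the Taylor expansions are applied, i.e.\ outside the singular set $\Scal$; inside $\Scal$ the relevant contributions are handled directly via Lemma~\ref{lemma:singular1}. So I would state and prove the bound for $x\notin\Scal$, where by \eqref{1902:eq001} we have $1-r^2(x)\geq 1-(1-\tfrac{1}{16d})^2 \geq \tfrac{1}{16d}$, so that $\frac{1}{1-r^2}\leq 16d$ is bounded in terms of $d$ alone. Combining this with the bounds on $\frac{d}{E}\lVert D\rVert^2$ and $\frac{d}{E}\lVert H\rVert$ gives $X,Y = O(1)$ entrywise on the non-singular set, with the constant depending only on $d$.

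There is no real obstacle here: the lemma is essentially bookkeeping, and the only subtlety is recognising that the statement should be understood on the non-singular set (or, equivalently, that the definitions \eqref{def:XY} are only used there), so that \eqref{1902:eq001} can be invoked to tame the $\frac{1}{1-r^2}$ factor. The mild point worth writing out is the arithmetic $1-(1-\tfrac{1}{16d})^2 \geq \tfrac{1}{16d}$ and the elementary estimate $\sum_{\mu\in\Ecal}|\mu_j\mu_k|\leq\Ncal m$. Everything else — that $D$ and $H$ have entries of size $O(\sqrt m)$ and $O(m)$ respectively, cancelled by the $d/E$ prefactor — is immediate from \eqref{def:D}, \eqref{def:H} and the constraint $|\mu|^2=m$ on $\Ecal$.
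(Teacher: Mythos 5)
There is a genuine gap, and it is exactly at the point you flag as a ``mild subtlety'': restricting the statement to $x\notin\Scal$ is not what the paper needs. The lemma is invoked precisely to control $X$ and $Y$ \emph{on} the singular set: in the proof of Lemma~\ref{lemma:xyints} one writes $\int_{\TT^d}\tr(X)\,dx=\int_{\TT^d\setminus\Scal}\tr(X)\,dx+O(\meas(\Scal))$, and in the proof of Proposition~\ref{intro:proposition} the integral of $L(x)$ is extended from $\TT^d\setminus\Scal$ back to the whole torus at the cost of $O(\meas(\Scal))\ll\Rcal(6)$; both steps require $\tr(X)$, $\tr(Y^2)$, etc.\ to be uniformly bounded at (almost every) point of $\Scal$. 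Lemma~\ref{lemma:singular1} only handles $K_2$ on $\Scal$, not these integrals of traces, so your claim that the singular contributions are ``handled directly'' there does not cover the actual uses of the lemma. Your bound $|X_{jk}|\le d/(1-r^2)$ combined with \eqref{1902:eq001} genuinely fails inside $\Scal$, where $|r|$ can be arbitrarily close to $1$.

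The missing idea is the structural one the paper uses: $\Omega=\II_{2d}+\bigl(\begin{smallmatrix}X&Y\\Y&X\end{smallmatrix}\bigr)$ is a covariance matrix (of the conditioned gradient vector), hence positive semidefinite wherever $r^2\neq 1$. Its diagonal entries $1+X_{jj}$ are therefore non-negative, while from \eqref{def:XY} one has $X_{jj}=-\frac{d}{E}\frac{D_j^2}{1-r^2}\le 0$; together these force $0\le 1+X_{jj}\le 1$, i.e.\ $\frac{d}{E}\frac{D_j^2}{1-r^2}\le 1$ \emph{without} any lower bound on $1-r^2$. Since every entry of a positive semidefinite matrix is dominated in absolute value by the largest diagonal entry, all entries of $X$ and $Y$ are $O(1)$ at every point where $r^2\neq 1$, in particular almost everywhere on $\Scal$. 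Your entrywise estimates $|D_j|\le 2\pi\sqrt m$ and $|H_{jk}|\le 4\pi^2 m$ are correct and show that the $d/E$ prefactor absorbs the frequency scale, but they cannot by themselves tame the $1/(1-r^2)$ factor; the cancellation between $\|D\|^2$ and $1-r^2$ near points where $|r|$ is close to $1$ is exactly what the covariance-matrix argument supplies and what your proposal lacks.
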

\begin{proof}
    This can be proved as in \cite[Lemma 3.2]{krishnapur_nodal_2013}:
    first observe that any entry of $X$ or $Y$ is dominated by the diagonal terms
    of the matrix $\Omega$ given in \eqref{def:Omega},  since $\Omega$ is a covariance matrix;
    next observe that the diagonal entries of $\Omega$ are non-negative;
    and finally deduce by \eqref{def:XY} that the diagonal entries of $X$ are uniformly bounded.
\end{proof}

We now have all the tools to express the second intensity $K_2(x)$
in terms of the covariance function $r(x)$ and the functions $X$ and $Y$.

\begin{proposition}\label{1902:proposition}
    Let $x\in\TT^d$ such that $x$ is not in the singular set $\Scal$. Then
    \[
        K_2(x) = \frac{\Gcal_d^2}{4\pi^2} + L(x) + \epsilon(x),
    \]
    where
    \[
        \begin{split}
            L(x)
            =
            \frac{\Gcal_d^2}{4\pi^2}
            &\biggl[
            \frac{A_0}{2}r^2 + A_1\tr(X) + A_2\tr(Y^2) + \frac{3}{8}r^4 + A_3\tr(XY^2) + A_4\tr(X^2)
            \\
            &+
            A_5\tr(Y^4) + A_6\tr(Y^2)^2 + A_7\tr(X)\tr(Y^2)
            +
            \frac{A_1}{2}r^2\tr(X) + \frac{A_2}{2}r^2\tr(Y^2)\biggr],
        \end{split}
    \]
    and
    \[
        \epsilon(x) = O(r^6 + \tr(X^3) + \tr(Y^6)).
    \]
\end{proposition}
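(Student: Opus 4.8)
The plan is to substitute into the formula $K_2(x)=\frac{1}{2\pi\sqrt{1-r^2(x)}}\,\EE[\|w_1\|\cdot\|w_2\|]$ of Lemma~\ref{lemmapage7} both the explicit expansion of $\EE[\|w_1\|\cdot\|w_2\|]$ provided by Lemma~\ref{1302:lemma001} and the Taylor expansion~\eqref{1302:eq003} of $(1-r^2)^{-1/2}$, and then to multiply out and group the resulting monomials. The Taylor expansion is valid with a genuine $O(r^6)$ remainder exactly because $x\notin\Scal$: by~\eqref{1902:eq001} this forces $r^2(x)\le(1-\tfrac{1}{16d})^2<1$. Using $\frac{1}{2\pi}\cdot\frac{\Gcal_d^2}{2\pi}=\frac{\Gcal_d^2}{4\pi^2}$, the substitution gives
\[
K_2(x)=\frac{\Gcal_d^2}{4\pi^2}\Bigl(1+\tfrac12 r^2+\tfrac38 r^4+O(r^6)\Bigr)\Bigl(A_0+A_1\tr(X)+A_2\tr(Y^2)+\dots+O\bigl(\tr(X^3)+\tr(Y^6)\bigr)\Bigr),
\]
where the dots abbreviate the terms $A_3\tr(XY^2),A_4\tr(X^2),A_5\tr(Y^4),A_6\tr(Y^2)^2,A_7\tr(X)\tr(Y^2)$ of~\eqref{1702:eq001}. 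The contribution of $A_0=1$ is the main term $\Gcal_d^2/(4\pi^2)$; everything else must be sorted into $L(x)$ and $\epsilon(x)$.

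To bookkeep the expansion I would attach to each monomial a weight, declaring that $r$ has weight $1$, each matrix factor of $X$ has weight $2$, and each matrix factor of $Y$ has weight $1$, with weights of products added. Then $r^2$, $\tr(X)$, $\tr(Y^2)$ have weight $2$; the quantities $r^4$, $\tr(XY^2)$, $\tr(X^2)$, $\tr(Y^4)$, $\tr(Y^2)^2$, $\tr(X)\tr(Y^2)$, $r^2\tr(X)$, $r^2\tr(Y^2)$ have weight $4$; and $r^6$, $\tr(X^3)$, $\tr(Y^6)$ have weight $6$. Reading off the product above, $L(x)$ is precisely $\Gcal_d^2/(4\pi^2)$ times the sum of all terms of weight $2$ or $4$, and this list matches the bracket in the statement term by term, while every remaining term has weight at least $6$ and is absorbed into $\epsilon(x)$.

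It then remains to verify that each leftover, weight-$\ge 6$ contribution is $O\bigl(r^6+\tr(X^3)+\tr(Y^6)\bigr)$ pointwise, with an implied constant depending only on $d$ (here $\tr(X^3)$, which is non-positive, is understood in absolute value, i.e.\ as $\sum_i|\lambda_i(X)|^3$). I would invoke Lemma~\ref{1902:XYbouded} to get that the eigenvalues $\lambda_i$ of $X$ and $\nu_j$ of $Y$ are uniformly bounded, and note that $X=-\frac{d}{E(1-r^2)}\,D^t\cdot D$ is negative semidefinite, so that $\sum_i|\lambda_i|^3=|\tr(X^3)|$. A weight-$6$ term is reduced, via Cauchy--Schwarz for the trace inner product (e.g.\ $|\tr(XY^2)|\le\sqrt{\tr(X^2)\,\tr(Y^4)}$) and AM--GM, to a finite combination of monomials of the form $r^{a}|\lambda_i|^{b}\nu_j^{c}$ with $a+2b+c=6$; each such monomial is $\ll r^6+|\lambda_i|^3+\nu_j^6$ by the weighted arithmetic--geometric mean inequality, and summing over the (boundedly many) pairs $i,j$ finishes it. Any monomial of weight strictly larger than $6$ is first brought down to weight $6$ by extracting a bounded power of $r$, $\lambda_i$ or $\nu_j$. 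The main point requiring care is precisely this last step — checking that the genuine cross terms, such as $\tfrac12 r^2\tr(X^2)$, $\tfrac38 r^4\tr(X)$ and $\tfrac12 r^2\tr(X)\tr(Y^2)$ coming from multiplying the two expansions, collapse into the three-term error instead of surviving at weight $4$ — but once the weight accounting is in place this is mechanical, and the identification of $L(x)$ with the stated expression is then immediate.
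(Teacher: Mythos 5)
Your proposal is correct and follows essentially the same route as the paper: insert the Taylor expansion \eqref{1302:eq003} and the expansion \eqref{1702:eq001} into \eqref{bullet}, multiply out, read off the constant and $L(x)$, and absorb all remaining (weight $\geq 6$) terms into $\epsilon(x)$ using $|r|\leq 1$, the uniform boundedness of $X,Y$ from Lemma~\ref{1902:XYbouded}, trace Cauchy--Schwarz, and Young/AM--GM. Your weight bookkeeping and the remark that $\tr(X^3)$ must be read in absolute value (since $X$ is negative semidefinite of rank one) just make explicit what the paper's terse verification leaves implicit.
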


\begin{proof}
    We insert \eqref{1302:eq003} and \eqref{1702:eq001} into~\eqref{bullet}
    and multiply out all the terms. After extracting the constant term and the term $L(x)$,
    we claim that the remaining terms are all absorbed into $\epsilon(x)$.
    This is readily verified by applying a combination of Young's inequality,
    the inequality $\tr(AB)\ll \tr(A^2)+\tr(B^2)$,
    the bound $|r(x)|\leq 1$,
    and the fact that $X,Y=O(1)$ by Lemma~\ref{1902:XYbouded}.
\end{proof}

With the help of Proposition~\ref{1902:proposition}
we can finally deduce Proposition~\ref{intro:proposition}.

\subsection{Proof of Proposition~\ref{intro:proposition}}
First, by Lemma~\ref{lemmapage5} we have
\[
\Var(\Vcal) = \frac{E}{d} \int_{\TT^d} \Bigl(K_2(x)-\frac{\Gcal_d^2}{4\pi^2}\Bigr) dx.
\]
We remove the singular set $\Scal$ and estimate its contribution with Lemma~\ref{lemma:singular1}
as
\[
\int_{\Scal} \Bigl(K_2(x)-\frac{\Gcal_d^2}{4\pi^2}\Bigr) dx \ll \Rcal(6).
\]
On the non-singular set $\TT^d\setminus\Scal$ we can expand $K_2(x)$ by
using Proposition~\ref{1902:proposition}, obtaining
\[
\Var(\Vcal) = \frac{E}{d} \int_{\TT^d\setminus\Scal} (L(x)+\epsilon(x))\, dx + O(\Rcal(6)).
\]
Next, recalling that $|r(x)|\leq 1$ and that $X$ and $Y$ are uniformly bounded by Lemma \ref{1902:XYbouded},
we can use again Lemma \ref{lemma:singular1}
to extend the integration to the whole torus $\TT^d$
at the cost of an error $O(\Rcal(6))$, arriving at the identity
\begin{equation}\label{1902:eq004}
\Var(\Vcal) = \frac{E}{d} \int_{\TT^d} L(x)\, dx + O\biggl(\Rcal(6) + \int_{\TT^d}(\tr(X^3)+\tr(Y^6))\, dx\biggr).
\end{equation}
We have collected in Lemma \ref{lemma:xyints}
the calculation of the integrals of the various terms
defining $L(x)$ and the error $\epsilon(x)$ over the full torus.
By points \eqref{int_X3} and \eqref{int_Y6} of Lemma \ref{lemma:xyints} the error contributes
\[
\int_{\TT^d}(\tr(X^3)+\tr(Y^6))\, dx \ll \frac{|\Ccal(6)|}{\Ncal^6} = \Rcal(6).
\]
The terms in $L(x)$ are evaluated in points \eqref{int_X}--\eqref{int_r2Y2} of Lemma \ref{lemma:xyints} and give
\begin{equation}\label{1902:eq003}
\begin{split}
\frac{E}{d} \int_{\TT^d} L(x)\, dx
=
\frac{m\Gcal_d^2}{d}
&\biggl[
\frac{1}{\Ncal}\biggl(\frac{1}{d}\cdot(-d)+\frac{1}{2d^2}\cdot(d^2)+\frac{1}{2}\biggr)
\\
&+
\frac{1}{\Ncal^2}\biggl(
- 1 - \frac{1}{d} + \frac{9}{8} + \frac{1}{d+2} - \frac{d-1}{2d} + \frac{d(2d+7)}{4(d+2)^3}
\\
&\phantom{xxxxxxxxx}
+
\frac{d(d^2+2d+6)}{8(d+2)^3}
+
\frac{d}{2(d+2)}
-
\frac{1}{2}
+
\frac{d+2}{4d}
\biggr)
\biggr],
\end{split}
\end{equation}
up to an error of size
\[
O\left(\frac{m}{\Ncal^2}\left(\frac{1}{m^{(d-3)/4-\epsilon}}+
\frac{|\Xcal(4)|}{\Ncal^2}+\frac{|\Ccal(6)|}{\Ncal^4}\right)\right).
\]
Simplifying the right-hand side of \eqref{1902:eq003} we see that
the coefficient of $1/\Ncal$ vanishes, which confirms that Berry's
cancellation phenomenon occurs for every $d$, as was mentioned in the introduction.
The remaining terms in \eqref{1902:eq003} give the identity
\[
\frac{E}{d} \int_{\TT^d} L(x)\, dx
=
\frac{m}{\Ncal^2}\left(
\frac{d-1}{d(d+2)^3} \Gcal_d^2
+
O\left(\frac{1}{m^{(d-3)/4-\epsilon}}+\frac{|\Xcal(4)|}{\Ncal^2}+
\frac{|\Ccal(6)|}{\Ncal^4}\right)
\right).
\]
Inserting this into \eqref{1902:eq004} we obtain Proposition \ref{intro:proposition}.
To complete the proof it remains therefore to prove Lemma \ref{1302:lemma001},
which will be done in section \ref{S4}, and to calculate the integral over $\TT^d$
of the various pieces definining $L(x)$ (and the error $\epsilon(x)$),
which will be done in section~\ref{S:integrals}.

%%%%%%%%%%%%%%%
%% SECTION 4 %%

\section{Proof of Lemma \ref{1302:lemma001}}\label{S4}

Recall that we are given random vectors $w_1,w_2$ with
joint distribution $(w_1,w_2)\sim\Ncal(0,\Omega)$,
where the covariance matrix $\Omega$ is given in \eqref{def:Omega}
(which in turn is defined in terms of $X$ and $Y$ from \eqref{def:XY}).
Our aim is to express the expectation $\EE[\|w_1\|\cdot\|w_2\|]$
in terms of $X$ and $Y$.
Our proof follows to a large extent those in
\cite[Lemma 5.1]{krishnapur_nodal_2013} and \cite[Lemma 5.8]{benatar_random_2017},
so we limit ourselves to outline the main steps and we direct the reader
to \cite{benatar_random_2017,krishnapur_nodal_2013} for further details.

First one can prove, following Berry \cite[(24)]{berry_statistics_2002}
(see also \cite[Lemma A.1]{benatar_random_2017}), that
\begin{equation}\label{2102:eq001}
\EE[\|w_1\|\cdot\|w_2\|] = \frac{1}{2\pi}\iint_{[0,\infty)^2}
\bigl(f(0,0)-f(t,0)-f(0,s)+f(t,s)\bigr)\frac{dt\, ds}{(ts)^{3/2}},
\end{equation}
where
\begin{equation}\label{2102:eq002}
f(t,s) = \frac{1}{\sqrt{\det(\mathbb{I}_{2d}+J(t,s))}}
\end{equation}
with
\begin{equation}\label{one}
\mathbb{I}_{2d} + J(t,s) =
\begin{pmatrix}
(1+t)\mathbb{I}_{d} + tX & \sqrt{ts}\,Y \\
\sqrt{ts}\,Y & (1+s)\mathbb{I}_{d}
\end{pmatrix}.
\end{equation}
Next, we expand the function $f(t,s)$ around $X=Y=0$.
By the formula for the determinant of a block matrix we can write
\[
\begin{split}
\det(\mathbb{I}_{2d}+J)
=&
\det((1+t)\mathbb{I}_d+tX)
\\
&\times
\det((1+s)\mathbb{I}_d+sX - \sqrt{ts} Y ((1+t)\mathbb{I}_d+tX)^{-1}\sqrt{ts}Y).
\end{split}
\]
We factor out $(1+t)$ and $(1+s)$ in the first and second term on the right,
respectively, and obtain
\begin{equation}\label{2102:eq003}
\begin{split}
\det(\mathbb{I}_{2d}+J)
=&
(1+t)^d \det\left(\mathbb{I}_d+\frac{t}{1+t}X\right)
\\
&\times
(1+s)^d \det\left(\mathbb{I}_d+\frac{s}{1+s}X - \frac{ts}{(1+t)(1+s)} Y \Bigl(\mathbb{I}_d+\frac{t}{1+t}X\Bigr)^{-1}Y\right).
\end{split}
\end{equation}
We insert \eqref{2102:eq003} in \eqref{2102:eq002} and use the Taylor approximations
\[(\II-P)^{-1} = \II + P + O(P^2),\]
and
\[\frac{1}{\sqrt{\det(\II+P)}} = 1 - \frac{1}{2}\tr(P) + \frac{1}{4} \tr(P^2) +
\frac{1}{8} \tr(P)^2 + O(\max_{i,j} |P_{ij}^3|),\]
as $P\to 0$.
We introduce the following shorthands
\[
\eta(t) = \frac{1}{(1+t)^{d/2}},
\qquad
\theta(t) = \frac{t}{(1+t)^{d/2+1}},
\qquad
\xi(t) = \frac{t^2}{(1+t)^{d/2+2}},
\]
and thus arrive at the identity
\begin{equation}\label{2102:eq004}
\begin{split}
f(t,s)
={}&
\eta(t)\eta(s)
-
\frac{1}{2}\theta(t)\theta(s)\tr(X)
+
\frac{1}{2}\theta(t)\theta(s)\tr(Y^2)
\\
&-
\frac{1}{2}(\theta(t)\xi(s)+\xi(t)\theta(s))\tr(XY^2)
+
\Bigl(\frac{3}{8}\xi(t)+\frac{3}{8}\xi(s)+\frac{1}{4}\theta(t)\theta(s)\Bigr)\tr(X^2)
\\
&+
\frac{1}{4}\xi(t)\xi(s)\tr(Y^4)
+
\frac{1}{8}\xi(t)\xi(s)\tr(Y^2)^2
-
\frac{1}{4}(\xi(t)\theta(s)+\theta(t)\xi(s))\tr(X)\tr(Y^2)
\\
&+
O\left(\left(\frac{ts}{(1+t)(1+s)}\right)(\tr(X^3)+\tr(Y^6))\right).
\end{split}
\end{equation}
Note that in the derivation of \eqref{2102:eq004}
one also uses the fact that $\tr(YXY)=\tr(XY^2)$ and $\tr(X)^2=\tr(X^2)$,
the latter due to $\rk(X)=1$.

Define $h(t,s)$ to be the integrand appearing in \eqref{2102:eq001}, that is,
\[
h(t,s) = f(0,0) - f(t,0) - f(0,s) + f(t,s).
\]
By \eqref{2102:eq004} we can expand $h(t,s)$ in the form
\begin{equation}\label{2102:eq005}
\begin{split}
h(t,s)
={}&
A_0(t,s)
+
A_1(t,s)\tr(X)
+
A_2(t,s)\tr(Y^2)
+
A_3(t,s)\tr(XY^2)
\\
&+
A_4(t,s)\tr(X^2)
+
A_5(t,s)\tr(Y^4)
+
A_6(t,s)\tr(Y^2)^2
+
A_7(t,s)\tr(X)\tr(Y^2)
\\
&+
O(\min(1,t)\min(1,s)(\tr(X^3)+\tr(Y^6)),
\end{split}
\end{equation}
where
\begin{align*}
    A_0(t,s) &= (1-\eta(t))(1-\eta(s)), & A_1(t,s) &= \frac{1}{2}(\eta(t)\theta(s)+\theta(t)\eta(s)),\\
    A_2(t,s) &= \frac{1}{2}\theta(t)\theta(s), & A_3(t,s) &=
    -\frac{1}{2}(\xi(t)\theta(s)+\theta(t)\xi(s)),\\
    %\shortintertext{\centering$\displaystyle A_4(t,s)=-\frac{3}{8}(\xi(t)(1-\eta(s))
    %    +\xi(s)(1-\eta(t)))+\frac{1}{4}\theta(t)\theta(s),$}
    A_4(t,s)&=\mathrlap{-\frac{3}{8}(\xi(t)(1-\eta(s))+\xi(s)(1-\eta(t)))+\frac{1}{4}\theta(t)\theta(s),}\\
    A_5(t,s) &= 2A_6(t,s) = \frac{1}{4}\xi(t)\xi(s), &
    A_7(t,s) &= -\frac{1}{4}(\theta(t)\xi(s)+\theta(s)\xi(t)).
\end{align*}
Finally, we multiply \eqref{2102:eq005} with $(ts)^{-3/2}$ and integrate
over both $t,s\in[0,\infty)$. Using the identities (see \cite[8.380.3]{gradshteyn2007})
\[
\int_0^\infty (1-\eta(t))\frac{dt}{t^{3/2}} = \Gcal_d,
\qquad
\int_0^\infty \theta(t)\frac{dt}{t^{3/2}} = \frac{\Gcal_d}{d},
\qquad
\int_0^\infty \xi(t)\frac{dt}{t^{3/2}} = \frac{\Gcal_d}{d(d+2)},
\]
we deduce that the desired expectation \eqref{2102:eq001} equals
\[
\begin{split}
\iint_{[0,\infty)^2} \!\!\! h(t,s) \frac{dt\, ds}{(ts)^{3/2}}
=
\frac{\Gcal_d^2}{2\pi}
&\biggl(
A_0 + A_1\tr(X) + A_2\tr(Y^2) + A_3\tr(XY^2)
+
A_4\tr(X^2)\\
&+
A_5\tr(Y^4) + A_6\tr(Y^2)^2 + A_7\tr(X)\tr(Y^2)\biggr)
+
O(\tr(X^3)+\tr(Y^6)),
\end{split}
\]
where $A_0,\ldots,A_7$ are given by \eqref{2102:eq006}.
This proves Lemma \ref{1302:lemma001}.

%%%%%%%%%%%%%%%
%% SECTION 5 %%

\section{Integrals}\label{S:integrals}

It remains to compute a number of integrals that we used in the expansion of
$K_{2}$ in~\eqref{1902:eq004} and~\eqref{1902:eq003}.
These should be compared to~\cite[Lemmas~5.10 and~6.1]{benatar_random_2017},
except that here we use the more general coefficients from Lemma~\ref{lemma:bk}.
In Lemma \ref{lemma:xyints} below
we let $\simf$ denote asymptotic up to an error of size
\[O\left(\frac{|\Xcal(4)|}{\Ncal^{4}}+\frac{|\Ccal(6)|}{\Ncal^{6}}\right),\]
and $\simfe$ denote asymptotic up to an error of size
\[O\left(\frac{1}{\Ncal^{2}m^{(d-3)/4-\epsilon}}+\frac{|\Xcal(4)|}{\Ncal^{4}}
+\frac{|\Ccal(6)|}{\Ncal^{6}}\right).\]

\begin{lemma}\label{lemma:xyints}
    Let $r$ be as in \eqref{def:r}
    and $X,Y$ be as in \eqref{def:XY}. We have
    \newline
    \noindent\begin{minipage}{.5\linewidth}
        \begin{align}
            &\int_{\TT^d} \tr(X) \,dx
            \simf-\frac{d}{\Ncal}-\frac{d}{\Ncal^{2}},        \label{int_X}\\
            &\int_{\TT^d} \tr(Y^2)\,dx
            \simf\frac{d^{2}}{\Ncal}-\frac{2d}{\Ncal^{2}},    \label{int_Y2}\\
            &\int_{\TT^d} \tr(XY^2)\,dx
            \simf-\frac{d^{2}}{\Ncal^{2}},                    \label{int_XY2}\\
            &\int_{\TT^d} \tr(X^2)\,dx
            \simf\frac{d(d+2)}{\Ncal^{2}},                    \label{int_X2}\\
            &\int_{\TT^d} \tr(Y^4)\,dx
            \simfe \frac{d^{3}(2d+7)}{(d+2)}\cdot\frac{1}{\Ncal^{2}}, \label{int_Y4}\\
            &\int_{\TT^d} \tr(Y^2)^2\,dx
            \simfe \frac{d^{3}(d^{2}+2d+6)}{(d+2)}\cdot\frac{1}{\Ncal^{2}}, \label{int_Y22}
        \end{align}
    \end{minipage}%
    \begin{minipage}{.5\linewidth}
        \begin{align}
            &\int_{\TT^d} \tr(X)\tr(Y^2)\,dx
            \simf-\frac{d^{3}}{\Ncal^{2}},                    \label{int_XtrY2}\\
            &\int_{\TT^d} r^2\tr(X)\,dx
            \simf-\frac{d}{\Ncal^{2}},                        \label{int_r2X}\\
            &\int_{\TT^d} r^2\tr(Y^2)\,dx
            \simf\frac{d(d+2)}{\Ncal^{2}},                    \label{int_r2Y2}\\
            &\int_{\TT^d} \tr(X^3)\,dx
            \ll\frac{|\Ccal(6)|}{\Ncal^{6}},                 \label{int_X3}\\
            &\int_{\TT^d} \tr(Y^6)\,dx
            \ll\frac{|\Ccal(6)|}{\Ncal^{6}}.                \label{int_Y6}
        \end{align}
    \end{minipage}
\end{lemma}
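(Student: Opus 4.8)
The proof of Lemma~\ref{lemma:xyints} is a computation: every integrand is a polynomial in $r$, $D$, $H$ and in the rational functions $1/(1-r^2)$, $r/(1-r^2)$ that appear in $X$ and $Y$, so the first step is to Taylor-expand these rational functions about $r=0$ and reduce each integral to a finite sum of integrals of monomials in the Fourier coefficients of $r$, $D$ and $H$. Writing $r=\Ncal^{-1}\sum_\mu e(\mu\cdot x)$, $D_j=2\pi i\,\Ncal^{-1}\sum_\mu \mu_j\, e(\mu\cdot x)$ and $H_{jk}=-4\pi^2\Ncal^{-1}\sum_\mu \mu_j\mu_k\, e(\mu\cdot x)$, an integral over $\TT^d$ of a product of $\ell$ such factors becomes, by orthogonality, $\Ncal^{-\ell}$ times a sum over $(\mu_1,\dots,\mu_\ell)\in\Ecal^\ell$ with $\mu_1+\cdots+\mu_\ell=0$, i.e.\ over $\Ccal(\ell)$, weighted by a polynomial in the coordinates $\mu_{i,j}$ which is homogeneous of degree equal to twice the number of $D$-factors plus four times the number of $H$-factors. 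So each integral is a finite linear combination of sums $\Ncal^{-\ell}\sum_{\Ccal(\ell)} P(\mu_1,\dots,\mu_\ell)$.

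**Reduction via the correlation decomposition.** For the integrals that are $\simf$ or $\simfe$ (namely \eqref{int_X}--\eqref{int_r2Y2}) only the terms with $\ell\le 4$ survive in the main term: the higher-order Taylor terms and the $\ell=6$ contributions are bounded crudely by $|r|^6$-type estimates and absorbed into $O(|\Ccal(6)|/\Ncal^6)$, using $|r|\le 1$, $X,Y=O(1)$ (Lemma~\ref{1902:XYbouded}), and $\Rcal(6)=|\Ccal(6)|/\Ncal^6$. For the surviving $\ell=2$ and $\ell=4$ sums I would apply the decomposition \eqref{eq:4decomp}: the $\ell=2$ case forces $\mu_1=-\mu_2$, and the three symmetric pairings of $\Ccal(4)$ give the main terms, with the diagonal part $\Dcal''$ (correlations supported on a single antipodal pair, contributing $O(\Ncal^{-3})$, hence negligible) and the non-degenerate part $\Xcal(4)$ (contributing $O(|\Xcal(4)|/\Ncal^4)$) pushed into the error. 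On a symmetric pairing such as $\mu_1=-\mu_2$, $\mu_3=-\mu_4$, the weight $P$ factors and the sum becomes a product of inner-product moments $\Ncal^{-2}\sum_{\mu,\nu}(\mu\cdot\nu)^k m^{-k}$, i.e.\ exactly the quantities $B_k$ from \eqref{Bk:def}; here the extra error $m^{-(d-3)/4+\epsilon}$ in $B_k$ for $k\ge4$ is precisely what produces the $\simfe$ (as opposed to $\simf$) in \eqref{int_Y4} and \eqref{int_Y22}. Plugging in $B_2=1/d$ and the explicit values $B_4,B_6$ from Lemma~\ref{lemma:bk}, together with $E=4\pi^2m$ which cancels the $4\pi^2$ and $16\pi^4$ factors carried by $D$ and $H$, yields the stated rational constants; the bookkeeping of which pairing contributes the $\Ncal^{-1}$ versus the $\Ncal^{-2}$ term (the $\Ncal^{-1}$ terms come from pairings in which the two "halves" of the product coincide, e.g.\ $\mu_1=-\mu_2=\mu_3=-\mu_4$ up to sign, collapsing an $\ell=4$ sum into an $\ell=2$ sum) is the most error-prone part of the calculation.

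**The error integrals \eqref{int_X3}, \eqref{int_Y6}.** These are simpler: $\tr(X^3)$ and $\tr(Y^6)$ are nonnegative (for $X^3$ because $X=-\frac{d}{E(1-r^2)}D^tD$ is a nonpositive rank-one matrix so $X^3\le 0$; one takes $\tr(-X)^3\ge0$, or more directly bounds $|\tr(X^3)|\ll|X|^3_{\mathrm{entrywise}}$ times a constant), and using $1/(1-r^2)\le 16d$ outside $\Scal$ — but since we have already extended to the whole torus with Lemma~\ref{1902:XYbouded}, we may just use $X,Y=O(1)$ — each is dominated entrywise by a constant multiple of $|r|^{\text{(even power)}}$ with the power at least $6$ after accounting for the $D,H$ normalisations and $E=4\pi^2m$, so $\int_{\TT^d}\tr(X^3)\,dx,\ \int_{\TT^d}\tr(Y^6)\,dx \ll \int_{\TT^d}|r|^6\,dx = \Rcal(6) = |\Ccal(6)|/\Ncal^6$ by \eqref{Rell}.

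**Main obstacle.** The conceptual content is entirely in Lemmas~\ref{lemma:equid}, \ref{lemma:bk} and the decomposition \eqref{eq:4decomp}, all of which are already available; the real work here is the combinatorial accounting of how each of the finitely many monomials arising from $X^a Y^b$-type products distributes over the pairings of $\Ccal(2)$ and $\Ccal(4)$, and correctly tracking the powers of $2\pi$, $4\pi^2$, $m$, and $\Ncal$. I expect the main obstacle to be organising this bookkeeping so that the $\Ncal^{-1}$ coefficients and the $\Ncal^{-2}$ coefficients are each assembled without sign or combinatorial-factor errors — in particular making sure that in products like $\tr(Y^2)$, where $Y$ itself contains a term $\frac{r}{1-r^2}D^tD$, the cross terms between the $H$-part and the $D^tD$-part of $Y$ are correctly expanded to the needed order and that the $O(r^2)$-type tails are consistently relegated to the error. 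No single step is deep, but the calculation is long enough that the only safe route is to treat each of \eqref{int_X}--\eqref{int_Y6} separately, in the order listed, reusing the values of $B_2,B_4,B_6$ and the pairing decomposition mechanically.
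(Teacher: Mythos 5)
Your overall route is the paper's: Taylor-expand the $(1-r^2)^{-1}$ factors in $X$ and $Y$, reduce each integral by orthogonality to correlation sums over $\Ccal(2)$ and $\Ccal(4)$, evaluate the degenerate pairings via the decomposition \eqref{eq:4decomp} and the moments $B_k$ of Lemma~\ref{lemma:bk} (whose $O(m^{-(d-3)/4+\eps})$ error is indeed exactly what turns $\simf$ into $\simfe$ in \eqref{int_Y4}--\eqref{int_Y22}), and bound the sixth-order terms trivially by $|\Ccal(6)|/\Ncal^6$; the paper merely interposes Lemma~\ref{lemma:dhrints} to organise the monomial integrals. However, one step as you state it would fail: the Taylor expansion of $(1-r^2)^{-1}$ and the claim that its tail is bounded by ``$|r|^6$-type estimates using $|r|\le 1$'' is not valid on all of $\TT^d$, since where $|r|$ is close to $1$ the factor $(1-r^2)^{-1}$ blows up and the remainder is not $O(r^6)$. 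The paper's first move, which your proposal omits for \eqref{int_X}--\eqref{int_r2Y2}, is to excise the singular set $\Scal$: on $\Scal$ the integrand is $O(1)$ by Lemma~\ref{1902:XYbouded} and $\meas(\Scal)\le 16^{\ell}\Rcal(\ell)$ by Lemma~\ref{lemma:singular1}, admissible by \eqref{eq:rlbound}; off $\Scal$ the bound \eqref{1902:eq001} keeps $(1-r^2)^{-1}$ uniformly bounded, so the expansion and its $O(r^6)$-tails are legitimate and land in the $|\Ccal(6)|/\Ncal^6$ error. You invoke this mechanism only for \eqref{int_X3}--\eqref{int_Y6}, where in fact it is not needed, and skip it where it is.

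A second, smaller point: your parenthetical explanation of the $\Ncal^{-1}$ main terms is wrong. Configurations with $\mu_1=-\mu_2=\mu_3=-\mu_4$ (up to sign) are precisely the diagonal set $\Dcal''$, which has only $O(\Ncal)$ elements and hence contributes $O(\Ncal^{-3})$; no $\ell=4$ pairing can produce an $\Ncal^{-1}$ term. The $\Ncal^{-1}$ coefficients in \eqref{int_X} and \eqref{int_Y2} come from the leading $\ell=2$ monomials, namely $\frac{1}{E}\int_{\TT^d} DD^t\,dx=\frac{1}{\Ncal}$ and $\frac{1}{E^2}\int_{\TT^d}\tr(H^2)\,dx=\frac{1}{\Ncal}$ (cf.\ \eqref{int_dd} and \eqref{int_h2}), while all $\ell=4$ pairings enter at the $\Ncal^{-2}$ level. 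Carried out mechanically the orthogonality computation would still give the right answer, but as a guiding principle for the bookkeeping this misattribution would send you looking for the main terms in the wrong place.
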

\begin{proof}
    First, for each of the integrals we bound the integrand separately in the
    singular and non-singular set. Therefore, by Lemma~\ref{1902:XYbouded}
    we have, say,
    \[\int_{\TT^{d}} \tr(X)\,dx = \int_{\TT^{d}\setminus\Scal} \tr(X)\,dx
    +O(\meas(\Scal)).\]
    We bound the error by Lemma~\ref{lemma:singular1} and in the integral
    use the definition of $X$~\eqref{def:XY}. Since we are not on the singular
    set, we can Taylor expand the $(1-r^{2})^{-1}$ factor and get
    \begin{multline*}
      \int_{\TT^{d}} \tr(X)\,dx =
      -\frac{d}{E}\left(\int_{\TT^{d}}(DD^{t})\,dx
      +\int_{\TT^{d}}r^{2}(DD^{t})\,dx\right)
      \\+O\left(\frac{1}{E}\int_{\TT^{d}}r^{4}(DD^{t})\,dx\right)+
      O\left(\frac{|\Ccal(6)|}{\Ncal^{6}}\right) + O(\Rcal(\ell)),
    \end{multline*}
    where we have reintroduced the contribution over the singular set
    via an error that is absorbed in $O(\Rcal(\ell))$. The term
    $O(\Rcal(\ell))$ is admissible by~\eqref{eq:rlbound} so we will not write it in
    the analysis below.
    We now apply Lemma~\ref{lemma:dhrints} and in particular
    from~\eqref{int_dd}, \eqref{int_r2dd} and~\eqref{int_r4dd} we
    deduce that
    \[\int_{\TT^{d}}\tr(X)\,dx\simf -\frac{d}{\Ncal}-\frac{d}{\Ncal^{2}}.\]
    The remaining integrals in the lemma are bounded
    with an identical argument except for the final two
    integrals~\eqref{int_X3} and~\eqref{int_Y6}. For these we bound
    trivially by using~\eqref{int_dd3} and~\eqref{int_h6}, respectively.
\end{proof}

Let $\simD$ denote asymptotic up to an error of size
\[O\left(\frac{1}{\Ncal^{3}}+\frac{|\Xcal(4)|}{\Ncal^{4}}\right),\]
and $\sime$ denote asymptotic up to an error of size
\[O\left(\frac{1}{\Ncal^{2}m^{(d-3)/4-\epsilon}}+\frac{|\Xcal(4)|}{\Ncal^{4}}
\right).\]
Notice that $\Ncal^{-1}\ll m^{-(d-3)/4+\epsilon}$ by~\eqref{eq:Nsize}.

\begin{lemma}\label{lemma:dhrints}
    Let $r$ be as in \eqref{def:r}
    and $D,H$ be as in \eqref{def:D},\eqref{def:H}. We have
    \newline
    \noindent\begin{minipage}{.46\linewidth}
        \begin{align}
            &\int_{\TT^d} r^2\, dx
            = \frac{1}{\Ncal},                                   \label{int_r2}\\
            &\int_{\TT^d} r^4 \,dx
            \simD \frac{3}{\Ncal^{2}},                            \label{int_r4}\\
            \frac{1}{E}&\int_{\TT^d} (D D^t) \,dx
            =\frac{1}{\Ncal},                                    \label{int_dd}\\
            \frac{1}{E^2}&\int_{\TT^d} (D D^t)^2 \,dx
            \simD\frac{d+2}{d}\cdot\frac{1}{\Ncal^{2}},                \label{int_dd2}\\
            \frac{1}{E}&\int_{\TT^d} (r^2 D D^t) \,dx
            \simD\frac{1}{\Ncal^{2}},                             \label{int_r2dd}\\
            \frac{1}{E^2}&\int_{\TT^d} \tr(H^2)\,dx
            =\frac{1}{\Ncal},                                    \label{int_h2}\\
            \frac{1}{E^2}&\int_{\TT^d} (r^2 \tr(H^2))\,dx
            \simD\frac{d+2}{d}\cdot\frac{1}{\Ncal^{2}},                \label{int_r2h2}\\
            \frac{1}{E^4}&\int_{\TT^d} \tr(H^4)\,dx
            \sime\frac{2d+7}{d(d+2)}\cdot\frac{1}{\Ncal^{2}},          \label{int_h4}
        \end{align}
    \end{minipage}%
    \begin{minipage}{.54\linewidth}
        \begin{align}
            \frac{1}{E^4}&\int_{\TT^d} \tr(H^2)^2 \,dx
            \sime\frac{d^{2}+2d+6}{d(d+2)}\cdot\frac{1}{\Ncal^{2}},    \label{int_h22}\\
            \frac{1}{E^3}&\int_{\TT^d} (D D^t \tr(H^2))\,dx
            \simD\frac{1}{\Ncal^{2}},                         \label{int_ddh2}\\
            \frac{1}{E^2}&\int_{\TT^d} (r D H D^t) \,dx
            \simD-\frac{1}{d}\cdot\frac{1}{\Ncal^{2}},             \label{int_rdhd}\\
            \frac{1}{E^3}&\int_{\TT^d} (D H^2 D^t) \,dx
            \simD\frac{1}{d}\cdot\frac{1}{\Ncal^{2}},              \label{int_dh2d}\\
            \frac{1}{E^3}&\int_{\TT^d} (D D^t)^3 \,dx
            \ll \frac{|\Ccal(6)|}{\Ncal^{6}},                \label{int_dd3}\\
            \frac{1}{E}&\int_{\TT^d} (r^4 D D^t)\,dx
            \ll \frac{|\Ccal(6)|}{\Ncal^{6}},                \label{int_r4dd}\\
            \frac{1}{E^6}&\int_{\TT^d} \tr(H^6)\,dx
            \ll \frac{|\Ccal(6)|}{\Ncal^{6}},                \label{int_h6}\\
            \frac{1}{E^3}&\int_{\TT^d} (r D D^t D H D^t)\,dx
            \ll \frac{|\Ccal(6)|}{\Ncal^{6}}.               \label{int_rdddhd}
        \end{align}
    \end{minipage}
\end{lemma}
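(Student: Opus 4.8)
The plan is to reduce every integral to a weighted sum over the correlation sets $\Ccal(\ell)$ by Fourier expansion and orthogonality of characters on the torus, $\int_{\TT^d}e(\nu\cdot x)\,dx=\mathbf{1}[\nu=0]$. Each building block is a trigonometric polynomial of the shape $\frac{c}{\Ncal}\sum_{\mu\in\Ecal}e(\mu\cdot x)\,p(\mu)$ with $c\in\{1,2\pi i,-4\pi^2\}$ and $p$ a monomial in the coordinates of degree $0$ (for $r$), $1$ (for $D_j$) or $2$ (for $H_{jk}$). Multiplying $\ell$ such factors and contracting the coordinate indices forced by the traces and inner products, the integrand becomes $\frac{c'}{\Ncal^\ell}\sum_{\mu_1,\dots,\mu_\ell\in\Ecal}e\bigl((\mu_1+\cdots+\mu_\ell)\cdot x\bigr)\,W(\mu_1,\dots,\mu_\ell)$, where $W$ is a product of inner products $\mu_i\cdot\mu_j$ whose incidence pattern is dictated by the matrix structure: a factor $\tr(H^k)$ gives a $k$-cycle $(\mu_1\cdot\mu_2)(\mu_2\cdot\mu_3)\cdots(\mu_k\cdot\mu_1)$, a factor $(DD^t)^k$ gives a product of $k$ independent edges, a factor $DH^jD^t$ gives a path with $j+1$ edges, and so on; factors of $r$ contribute no inner products at all. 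Integrating over $\TT^d$ and recalling \eqref{Rell}, the integral equals $\frac{c'}{\Ncal^\ell}\sum_{\Ccal(\ell)}W$.

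When $\ell=2$ (identities \eqref{int_r2}, \eqref{int_dd}, \eqref{int_h2}) the only correlation is $\mu_2=-\mu_1$, and the computation is exact: $\sum_{\mu\in\Ecal}|\mu|^{2k}=m^k\Ncal$, $|2\pi i|^2=4\pi^2$ and $E=4\pi^2m$ give the stated values with no error. When $\ell=4$ I would split $\sum_{\Ccal(4)}$ via \eqref{eq:4decomp}. On each of the three symmetric pieces, say $\mu_1=-\mu_2$, $\mu_3=-\mu_4$, I substitute the antipodal relations into $W$: every inner product of two paired indices collapses to $\mp m$, and $W$ reduces to a signed multiple of $m^a(\mu_b\cdot\mu_c)^k$ in the two remaining free variables $\mu_b,\mu_c\in\Ecal$. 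Summing, $\sum_{\mu_b,\mu_c}(\mu_b\cdot\mu_c)^k=m^k\Ncal^2B_k$, and Lemma~\ref{lemma:bk} evaluates this: $B_0=1$ and $B_2=1/d$ are exact, $B_k=0$ for $k$ odd (equivalently $\sum_{\mu\in\Ecal}\mu=0$, which also disposes of the linear sums occurring in \eqref{int_rdhd} and \eqref{int_dh2d}), and $B_4=3/(d(d+2))$ up to $O(m^{-(d-3)/4+\epsilon})$. The diagonal set $\Dcal''$ has $O(\Ncal)$ elements with $|W|\ll m^{\deg W/2}$, producing an error $O(\Ncal^{-3})$ after dividing by the relevant power of $E$; since $\Ncal^{-1}\ll m^{-(d-3)/4+\epsilon}$ by \eqref{eq:Nsize} this is absorbed, and $\Xcal(4)$ similarly contributes $O(|\Xcal(4)|\Ncal^{-4})$. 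Summing the three symmetric contributions and tracking the constants $2\pi i$, $-4\pi^2$, $E^{-k}$ yields exactly the claimed right-hand sides. Only \eqref{int_h4} and \eqref{int_h22} genuinely involve $B_4$, which is why these two carry the extra error $O(\Ncal^{-2}m^{-(d-3)/4+\epsilon})$ and are stated with $\sime$, while the remaining degree-$4$ integrals are $\simD$.

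For the degree-$6$ integrals \eqref{int_dd3}, \eqref{int_r4dd}, \eqref{int_h6} and \eqref{int_rdddhd} only an upper bound is required, so I would bound trivially: $|\mu_i\cdot\mu_j|\le m$ gives $|W|\ll m^{\deg W/2}$, hence the integral is $\ll m^{\deg W/2}\Ncal^{-6}|\Ccal(6)|$, and dividing by the appropriate power of $E=4\pi^2m$ leaves $\ll|\Ccal(6)|/\Ncal^6$. The hard part here is not conceptual but bookkeeping: correctly reading off the incidence pattern of $W$ from each matrix expression, handling the multiplicities and signs generated by the three pairings in \eqref{eq:4decomp}, and matching the powers of $2\pi i$ and $4\pi^2$ against $E^{-k}$. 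The one point that must be checked case by case is that several of the symmetric pieces collapse to an odd power of a single inner product or to $\sum_{\mu\in\Ecal}\mu$ and therefore vanish; missing such a cancellation would corrupt the clean leading constants.
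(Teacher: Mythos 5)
Your proposal is correct and follows essentially the same route as the paper: orthogonality reduces each integral to a weighted sum over $\Ccal(\ell)$, the decomposition \eqref{eq:4decomp} isolates the three antipodal pairings with $\Dcal''$ and $\Xcal(4)$ as errors, the moments $B_k$ from Lemma~\ref{lemma:bk} (with $B_1=B_3=0$, $B_2=1/d$ exact, $B_4$ carrying the $m^{-(d-3)/4+\epsilon}$ error, exactly as you note for \eqref{int_h4} and \eqref{int_h22}) evaluate the leading terms, and the degree-$6$ integrals are bounded trivially by $|\Ccal(6)|/\Ncal^6$. The remaining work is precisely the bookkeeping you describe, carried out in the paper case by case.
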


\begin{proof}
    The integral~\eqref{int_r2} follows from the definition of $r$~\eqref{def:r}:
    \[\int_{\TT^{d}}
        r^{2}\,dx=\frac{1}{\Ncal^{2}}\sum_{\mu_{1},\mu_{2}}\int_{\TT^{d}}e(\langle\mu_{1}+\mu_{2},x\rangle)\,dx
    =\frac{1}{\Ncal^{2}}\sum_{\mu_{1}+\mu_{2}=0}1=\frac{1}{\Ncal}.\]
    For~\eqref{int_r4} we use the decomposition~\eqref{eq:4decomp} of
    4-correlations to get
    \[\int_{\TT^{d}}r^{4}\,dx=\frac{1}{\Ncal^{4}}\sum_{\mu_{1}+\ldots+
            \mu_{4}=0}1=\frac{3}{\Ncal^{2}}+O\left(\frac{1}{\Ncal^{3}}+
    \frac{|\Xcal(4)|}{\Ncal^{4}}\right).\]
    The remaining integrals are treated in a similar fashion, but we have to be
    more careful since the summands are more complicated inner products.
    Now,
    \begin{align*}
        \int_{\TT^{d}} (DD^{t}) \,dx
            &= \frac{-(2\pi)^{2}}{\Ncal^{2}}\sum_{\mu_{1},\mu_{2}}\int_{\TT^{d}}
            e(\langle \mu_{1}+\mu_{2},x\rangle)\mu_{1}\mu_{2}^{t}\,dx\\
            &=\frac{-4\pi^{2}}{\Ncal^{2}}\sum_{\mu_{1}+\mu_{2}=0}\mu_{1}\cdot\mu_{2}
            %\\&
            =\frac{-4\pi^{2}}{\Ncal^{2}}\sum_{\mu_{1}}-|\mu_{1}|^{2}=\frac{E}{\Ncal}.
    \end{align*}
    In the rest of the integrals we will repeatedly express the sums in terms
    of the numbers $B_{k}$ (see~\eqref{Bk:def}). First,
    \begin{align*}
        \int_{\TT^{d}}(DD^{t})^{2}\,dx
        &= \frac{(4\pi^{2})^{2}}{\Ncal^{4}}\sum_{\Ccal(4)}
        \mu_{1}\mu_{2}^{t}\mu_{3}\mu_{4}^{t}\\
        &\simD\frac{(4\pi^{2})^{2}}{\Ncal^{4}}\Biggl(\sum_{\substack{\mu_{1}+\mu_{2}=0\\
            \mu_{3}+\mu_{4}=0}}(-\mu_{1}\cdot\mu_{1})(-\mu_{3}\cdot\mu_{3})
            +2\sum_{\substack{\mu_{1}+\mu_{3}=0\\
        \mu_{2}+\mu_{4}=0}}(\mu_{1}\cdot\mu_{2})^{2}\Biggr),
    \end{align*}
    by symmetry. Thus,
    \[\int_{\TT^{d}}(DD^{t})^{2}\,dx
        \simD\frac{(4\pi^{2})^{2}m^{2}}{\Ncal^{2}}(1+2B_{2})\simD
    \frac{E^{2}}{\Ncal^{2}}\frac{d+2}{d}.\]
    Similarly,
    \[\int_{\TT^{d}}(r^{2}DD^{t})\,dx = \frac{-4\pi^{2}}{\Ncal^{4}}\sum_{\Ccal(4)}
        \mu_{3}\mu_{4}^{t}\simD\frac{-4\pi^{2}}{\Ncal^{4}}(-m\Ncal^{2}+2m\Ncal^{2}B_{1}),\]
    since only the degenerate correlations with $\mu_{3}=-\mu_{4}$ contribute.

    Next we have integrals that contain the trace of the Hessian. From the
    definition of $H$~\eqref{def:H} it easily follows in~\eqref{int_h2} that
    \[\int_{\TT^{d}}\tr(H^{2})\,dx =
        \frac{(4\pi^{2})^{2}}{\Ncal^{2}}\sum_{\mu_{1}+\mu_{2}=0}\tr(\mu_{1}^{t}
        \mu_{1}\mu_{2}^{t}\mu_{2})=\frac{(4\pi^{2})^{2}}{\Ncal^{2}}\sum_{\mu_{1}}
    (\mu_{1}\cdot\mu_{1})^{2}=\frac{E^{2}}{\Ncal},\]
    since the $\mu_{i}$ are $1\times d$ row vectors.
    For~\eqref{int_r2h2} we then obtain
    \begin{equation*}
        \int_{\TT^{d}}r^{2}\tr(H^{2})\,dx
        = \frac{(4\pi^{2})^{2}}{\Ncal^{4}}
        \sum_{\Ccal(4)}\tr(\mu_{3}^{t}\mu_{3}\mu_{4}^{t}
        \mu_{4})
        \simD\frac{(4\pi^{2})^{2}}{\Ncal^{4}}(m^{2}\Ncal^{2}
        +2m^{2}\Ncal^{2}B_{2})
        \simD\frac{E^{2}}{\Ncal^{2}}\frac{d+2}{d},
    \end{equation*}
    as required. Following the same train of thought, we get for~\eqref{int_h4}
    that
    \[\int_{\TT^{d}}\tr(H^{4})\,dx =
        \frac{E^{4}}{\Ncal^{2}}(2B_{2}+B_{4})+O\left(\frac{1}{\Ncal^{3}}+
        \frac{|\Xcal(4)|}{\Ncal^{4}}\right)
    \sime\frac{E^{4}}{\Ncal^{2}}\left(\frac{2}{d}+\frac{3}{(2+d)d}\right),\]
    where the additional error is introduced by the $B_{4}$ term and
    Lemma~\ref{lemma:bk}.
    Analogously for~\eqref{int_h22} we get
    \[\int_{\TT^{d}}\tr(H^{2})^{2}\,dx = \frac{E^{4}}{\Ncal^{2}}(1+2B_{4})+
        O\left(\frac{1}{\Ncal^{3}}+\frac{|\Xcal(4)|}{\Ncal^{4}}\right)
    \sime\frac{E^{4}}{\Ncal^{2}}\frac{d^{2}+2d+6}{d(d+2)}.\]
    Now,
    \[\int_{\TT^{d}}(DD^{t}\tr(H^{2}))\,dx = \frac{-(4\pi^{2})^{3}}{\Ncal^{4}}
        \sum_{\Ccal(4)}\mu_{1}\mu_{2}^{t}\tr(\mu_{3}^{t}\mu_{3}\mu_{4}^{t}\mu_{4})
    \simD\frac{-(4\pi^{2})^{3}}{\Ncal^{4}}(-m^{3}\Ncal^{2}+2B_{3}),\]
    and similarly
    \[\int_{\TT^{d}}(rDHD^{t})\,dx =
        \frac{E^{2}}{\Ncal^{2}}(2B_{1}-B_{2})\simD\frac{-E^2}{d}\cdot\frac{1}{\Ncal^{2}}.
    \]
    Furthemore,
    \[\int_{\TT^{d}}(DH^{2}D^{t})\,dx \simD
    \frac{-E^{3}}{\Ncal^{2}}(B_{1}+B_{3}-B_{2}),\]
    which simplifies to the correct value since $B_{1}=B_{3}=0$.

    For the remaining integrals~\eqref{int_dd3}--\eqref{int_rdddhd} we bound
    everything trivially, e.g.
    \[\int_{\TT^{d}}(DD^{t})^{3}\,dx = \frac{-(4\pi^{2})^{3}}{\Ncal^{6}}
        \sum_{\Ccal(6)}\mu_{1}\mu_{2}^{t}\mu_{3}\mu_{4}^{t}
        \mu_{5}\mu_{6}^{t}\ll \frac{(4\pi^{2})^{3}}{\Ncal^{6}}(\sqrt{m})^{6}
    |\Ccal(6)|,\]
    and similarly for the rest.
\end{proof}

\end{document}